\newtheorem{theorem}{Theorem}
\long\def\symbolfootnote[#1]#2{\begingroup
\def\thefootnote{\fnsymbol{footnote}}\footnote[#1]{#2}\endgroup}
\newcommand{\qbinom}[2]{\genfrac{[}{]}{0pt}{}{#1}{#2}_{q}}
\newcommand{\cref}[1]{Corollary \ref{corollary:#1}}
\newcommand{\fig}[2]{\begin{figure}[ht]
\centerline{\scalebox{.66}{\epsfig{file=#1.eps}}}
\caption{#2}
\label{fig:#1}
\end{figure}}
\title{A Fibonacci analogue of Stirling numbers}
\author{
Quang T. Bach \\
\small Department of Mathematics\\[-0.8ex]
\small University of California, San Diego\\[-0.8ex]
\small La Jolla, CA 92093-0112. USA\\[-0.8ex]
\small \texttt{qtbach@ucsd.edu}
\and
Roshil Paudyal\\
\small{Department of Mathematics} \\
\small{Howard University}\\
\small \texttt{roshil.paudyal@bison.howard.edu}
\and
Jeffrey B. Remmel \\
\small Department of Mathematics\\[-0.8ex]
\small University of California, San Diego\\[-0.8ex]
\small La Jolla, CA 92093-0112. USA\\[-0.8ex]
\small \texttt{remmel@math.ucsd.edu}
\and
}
\date{\small Submitted: Date 1;  Accepted: Date 2;
 Published: Date 3.\\
\small MR Subject Classifications: 05A15, 05E05 \\
keywords: Fibonacci numbers, Stirling numbers, Lah numbers}
\begin{document}

\maketitle

\begin{abstract}
In recent years, there has been renewed interest in 
the Fibonomials $\binom{n}{k}_F$.  That is, we define the Fibonacci numbers 
by setting $F_1=1=F_2$ and $F_n =F_{n-1}+F_{n-2}$ for $n \geq 3$. 
We let $n_F! = F_1 \cdots F_n$ and 
$\binom{n}{k}_F = \frac{n_F!}{k_F!(n-k)_F!}$.  One can 
easily prove that $\binom{n}{k}_F$ is an integer and 
a combinatorial interpretation of $\binom{n}{k}_F$ was given in \cite{SS}. 

The goal of this paper is to find similar analogues for 
the Stirling numbers of the first and second kind and the Lah numbers. 
That is, we let $(x)_{\downarrow_0} = (x)_{\uparrow_0} = 1$ and for 
$k \geq 1$, $(x)_{\downarrow_k} = x(x-1) \cdots (x-k+1)$ and 
$(x)_{\uparrow_k} = x(x+1) \cdots (x+k-1)$. Then the Stirling numbers of 
the first and second kind are the connections coefficients 
between the usual power basis $\{x^n:n \geq 0\}$ and 
the falling factorial basis $\{(x)_{\downarrow_n}:n \geq 0\}$ in 
the polynomial ring $\mathbb{Q}[x]$ and the Lah numbers  
are the connections coefficients 
between the rising factorial  basis $\{(x)_{\uparrow_n}:n \geq 0\}$ and 
the falling factorial basis $\{(x)_{\downarrow_n}:n \geq 0\}$ in 
the polynomial ring $\mathbb{Q}[x]$.  In 
this paper, we will focus the Fibonacci analogues of the Stirling 
numbers.  Our idea is to 
replace the falling factorial basis and the rising factorial 
basis by the Fibo-falling factorial basis 
$\{(x)_{\downarrow_{F,n}}:n \geq 0\}$ and the Fibo-rising factorial 
basis $\{(x)_{\uparrow_{F,n}}:n \geq 0\}$ where 
$(x)_{\downarrow_{F,0}} = (x)_{\uparrow_{F,0}} = 1$ and for 
$k \geq 1$, $(x)_{\downarrow_{F,k}} = x(x-F_1) \cdots (x-F_{k-1})$ and 
$(x)_{\uparrow_{F,k}} = x(x+F_1) \cdots (x+F_{k-1})$. Then we study the 
combinatorics of the connection coefficients between
the usual power basis, the Fibo-falling factorial basis, and 
the Fibo-rising factorial basis. In each case, we can give 
a rook theory model for the connections coefficients 
and show how this rook theory model can give combinatorial 
explanations for many of the properties of these coefficients. 

\end{abstract}

\section{Introduction}

In 1915, Fonten\'e in \cite{Fon} suggested a generalization 
$n_A!$ and $\binom{n}{k}_A$ of $n!$ and the binomial coefficient  
$\binom{n}{k}$  depending on 
any sequence $A = \{A_n:n \geq 0\}$ of real or complex numbers such 
that $A_0 =0$ and $A_n \neq 0$ for all $n \geq 1$ by defining 
$0_A ! = 1$, $n_A! =  A_1 A_2 \cdots A_n$ for $n \geq 1$, and 
$\binom{n}{k}_A = \frac{n_A!}{k_A! (n-k)_A!}$ for $0 \leq k \leq n$. 
If we let $F = \{F_n: n\geq 0\}$ be the sequence of Fibonacci numbers 
defined by $F_0 =0$, $F_1 =1$, and $F_n = F_{n-1}+F_{n-2}$ 
for $n \geq 2$, then the $\binom{n}{k}_F$'s are known as the Fibonomials. 
The Fibonomial $\binom{n}{k}_F$ are integers since Gould 
\cite{Gould} proved that 
$$\binom{n}{k}_F = F_{k+1} \binom{n-1}{k}_F +F_{n-k+1} 
\binom{n-1}{k-1}_F.$$
There have been a series of papers looking at the 
properties of the Fibonomials \cite{ACMS,CS,Gould,GS,H,Torreto,Troj,SS}.
There is a nice combinatorial model for 
the Fibonomial coefficients given in \cite{SS} and 
the papers \cite{ACMS,CS} have developed further properties and 
generalizations of the Fibonomials using this model.

Our goal in this paper is to define Fibinomials type analogues 
for the Stirling numbers of the first and second kind and 
to study their properties. 
Let $\mathbb{Q}$ denote the rational numbers and 
$\mathbb{Q}[x]$ denote the ring of polynomials over $\mathbb{Q}$.
There are three very natural bases for $\mathbb{Q}[x]$. 
The usual power basis $\{x^n: n\geq 0\}$, 
the falling factorial basis  $\{(x)_{\downarrow_n}: n\geq 0\}$, 
and the rising factorial basis  $\{(x)_{\uparrow_n}: n\geq 0\}$. 
Here we let $(x)_{\downarrow_0} = (x)_{\uparrow_0} = 1$ and for 
$k \geq 1$, $(x)_{\downarrow_k} = x(x-1) \cdots (x-k+1)$ and 
$(x)_{\uparrow_k} = x(x+1) \cdots (x+k-1)$.
Then the Stirling numbers of the first kind $s_{n,k}$, 
the Stirling numbers of the second kind $S_{n,k}$ and 
the Lah numbers $L_{n,k}$ are defined by specifying 
that for all $n \geq 0$, 
\begin{equation*}
(x)_{\downarrow_n} = \sum_{k=1}^n s_{n,k} \ x^k, \ \ 
x^n = \sum_{k=1}^n S_{n,k} \ (x)_{\downarrow_k}, \ \mbox{and} \ \ 
(x)_{\uparrow_n} = \sum_{k=1}^n L_{n,k} \ (x)_{\downarrow_k}.
\end{equation*}
The signless Stirling numbers of the first kind 
are defined by setting $c_{n,k} = (-1)^{n-k} s_{n,k}$. 
Then it is well known that $c_{n,k}$, $S_{n,k}$, and $L_{n,k}$ 
can also be defined by the recursions that 
$c_{0,0} = S_{0,0} = L_{0,0} = 1$, 
$c_{n,k} = S_{n,k} = L_{n,k} = 0$ if either $n < k$ or 
$k < 0$, and 
\begin{equation*}
c_{n+1,k} = c_{n,k-1}+ n c_{n,k}, \ \ 
S_{n+1,k} =  S_{n,k-1}+kS_{n,k}, \ \mbox{and} \ \  
L_{n+1,k} = L_{n,k-1} +(n+k) L_{n,k}
\end{equation*}
for all $n,k \geq 0$. 
There are well known combinatorial interpretations of 
these connection coefficients. That is, 
$S_{n,k}$ is the number of set partitions of $[n] = \{1, \ldots, n\}$ 
into $k$ parts, $c_{n,k}$ is the number of permutations 
in the symmetric group $S_n$ with $k$ cycles, and 
$L_{n,k}$ is the number of ways to place $n$ labeled balls into $k$ unlabeled 
tubes with at least one ball in each tube.

We start with the tiling model of the $F_n$ of \cite{SS}. 
That is, let $\mathcal{FT}_n$ denote the set of tilings  
a column of height $n$ with tiles of height  
1 or 2 such that bottom most tile is of height 1. 
For example, possible tiling configurations 
for $\mathcal{FT}_i$ for $i \leq 4$ are shown in 

\fig{Tilings}{The tilings counted by $F_i$ for $1 \leq i \leq 4$.}

For each tiling $T \in \mathcal{FT}_n$, we let 
$\mathrm{one}(T)$ is the number of tiles of 
height 1 in $T$ and $\mathrm{two}(T)$ is the number of tiles of 
height 2 in $T$ and define 
\begin{equation}
F_n(p,q) = \sum_{T \in \mathcal{FT}_n} q^{\mathrm{one}(T)}p^{\mathrm{two}(T)}.
\end{equation} 
It is easy to see that 
 $F_1(p,q) =q$, $F_2(p,q) =q^2$ and $F_n(p,q) =q F_{n-1}(p,q)+ 
pF_{n-2}(p,q)$ for $n \geq 2$ so that $F_n(1,1) =F_n$.
We then 
define the $p,q$-Fibo-falling 
factorial basis 
$\{(x)_{\downarrow_{F,p,q,n}}:n \geq 0\}$ and the $p,q$-Fibo-rising factorial 
basis $\{(x)_{\uparrow_{F,p,q,n}}:n \geq 0\}$ by setting 
$(x)_{\downarrow_{F,p,q,0}} = (x)_{\uparrow_{F,p,q,0}} = 1$ and setting
\begin{eqnarray}
 (x)_{\downarrow_{F,p,q,k}} &=& x(x-F_1(p,q)) \cdots (x-F_{k-1}(p,q)) \ \mbox{and} \\ 
(x)_{\uparrow_{F,p,q,k}} &=& x(x+F_1(p,q)) \cdots (x+F_{k-1}(p,q))
\end{eqnarray}
for $k \geq 1$. 

Our idea to define $p,q$-Fibonacci analogues of 
the Stirling numbers of the first kind, $\mathbf{sf}_{n,k}(p,q)$, 
the Stirling numbers of the second kind, $\mathbf{Sf}_{n,k}(p,q)$, 
and the Lah numbers, $\mathbf{Lf}_{n,k}(p,q)$, is to 
define them   
to be the connection coefficients between the usual power basis 
$\{x^n: n \geq 0\}$ and the $p,q$-Fibo-rising factorial and 
$p,q$-Fibo-falling factorial bases. 
That is, we define $\mathbf{sf}_{n,k}(p,q)$, $\mathbf{Sf}_{n,k}(p,q)$, 
and $\mathbf{Lf}_{n,k}(p,q)$ by the equations  
\begin{equation}\label{pqsfdef}
(x)_{\downarrow_{F,p,q,n}} = \sum_{k=1}^n \mathbf{sf}_{n,k}(p,q) \ x^k,
\end{equation}
\begin{equation}\label{pqSfdef}
x^n = \sum_{k=1}^n \mathbf{Sf}_{n,k}(p,q) \ (x)_{\downarrow_{F,p,q,k}},
\end{equation}
and 
\begin{equation}\label{pqLfdef}
(x)_{\uparrow_{F,p,q,n}} = \sum_{k=1}^n \mathbf{Lf}_{n,k}(p,q) \ (x)_{\downarrow_{F,p,q,k}}
\end{equation}
for all $n \geq 0$. 

It is easy to see that these equations imply simple recursions for 
the connection coefficients $\mathbf{sf}_{n,k}(p,q)$s, $\mathbf{Sf}_{n,k}(p,q)$s, and $\mathbf{Lf}_{n,k}(p,q)$s. 
That is, the $\mathbf{sf}_{n,k}(p,q)$s can be defined by the recursions  
\begin{equation}\label{pqsfrec}
\mathbf{sf}_{n+1,k}(p,q) = \mathbf{sf}_{n,k-1}(p,q)- F_n(p,q) \mathbf{sf}_{n,k}(p,q) 
\end{equation}  plus the boundary 
conditions $\mathbf{sf}_{0,0}(p,q)=1$ and $\mathbf{sf}_{n,k}(p,q) =0$ if $k > n$ or $k < 0$. 
The $\mathbf{Sf}_{n,k}(p,q)$s can be defined 
by the recursions   
\begin{equation}\label{pqSfrec}
\mathbf{Sf}_{n+1,k}(p,q) = \mathbf{Sf}_{n,k-1}(p,q)+ F_k(p,q) \mathbf{Sf}_{n,k}(p,q) 
\end{equation}
plus the boundary 
conditions $\mathbf{Sf}_{0,0}(p,q)=1$ and $\mathbf{Sf}_{n,k}(p,q) =0$ if $k > n$ or $k < 0$.
The $\mathbf{Lf}_{n,k}(p,q)$s can be defined 
by the recursions    
\begin{equation}\label{pqLfrec}
\mathbf{Lf}_{n+1,k}(p,q) = \mathbf{Lf}_{n,k-1}(p,q)+ (F_k(p,q) +F_n(p,q))\mathbf{Lf}_{n,k}(p,q) 
\end{equation}
plus the boundary 
conditions $\mathbf{Lf}_{0,0}(p,q)=1$ and $\mathbf{Lf}_{n,k}(p,q) =0$ if $k > n$ or $k < 0$.
If we define $\mathbf{cf}_{n,k}(p,q):= (-1)^{n-k}\mathbf{sf}_{n,k}(p,q)$, then 
$\mathbf{cf}_{n,k}(p,q)$s can be defined by the recursions   
\begin{equation}\label{pqcfrec}
\mathbf{cf}_{n+1,k}(p,q) = \mathbf{cf}_{n,k-1}(p,q)+F_n(p,q) \mathbf{cf}_{n,k}(p,q) 
\end{equation}  plus the boundary 
conditions $\mathbf{cf}_{0,0}(p,q)=1$ and $\mathbf{cf}_{n,k}(p,q) =0$ if $k > n$ or $k < 0$. It also follows that 
\begin{equation}\label{pqcfdef}
(x)_{\uparrow_{F,p,q,n}} = \sum_{k=1}^n \mathbf{cf}_{n,k}(p,q) \ x^k.
\end{equation}

The goal of this paper is to develop a combinatorial model 
for the Fibo-Stirling numbers $\mathbf{Sf}_{n,k}$ and $\mathbf{cf}_{n,k}$. 
Our combinatorial model is 
a modification of the rook theory model for $S_{n,k}$ 
and $c_{n,k}$ except that we replace rooks by Fibonacci 
tilings.  We will show that we can use 
this model to give combinatorial proofs 
of the recursions (\ref{pqcfrec}) and (\ref{pqSfrec}) 
and defining equations (\ref{pqcfdef}) and (\ref{pqSfdef}) 
as well as a combinatorial proof of the fact that 
the infinite matrices $||\mathbf{Sf}_{n,k}||_{n,k \geq 0}$ 
and $||\mathbf{sf}_{n,k}||_{n,k \geq 0}$ are inverses of each other. 
There is also a rook theory model for the 
$\mathbf{Lf}_{n,k}(p,q)$s, but it is significantly different 
from the rook theory model  for the  
$\mathbf{Sf}_{n,k}(p,q)$s and the $\mathbf{cf}_{n,k}(p,q)$s. Thus we 
will give such a model in a subsequent paper. We should also 
note that there is a more general rook theory model which can be used 
to give combinatorial interpretations 
for the coefficients $\mathbf{sf}_{n,k}(1,1)$,  $\mathbf{Sf}_{n,k}(1,1)$, 
$\mathbf{Lf}_{n,k}(1,1)$, 
and $\mathbf{cf}_{n,k}(1,1)$ due to Miceli and the third author \cite{MR}. 
However, that model does not easily adapt to give a rook 
theory model for  the coefficients 
$\mathbf{sf}_{n,k}(p,q)$,  $\mathbf{Sf}_{n,k}(p,q)$, $\mathbf{Lf}_{n,k}(p,q)$, 
and $\mathbf{cf}_{n,k}(p,q)$.

The outline of this paper is as follows. 
In Section 2, we shall give a general rook theory model 
of tiling placements on Ferrers boards $B$. In the special 
case where $B$ is the board whose column 
heights are $0,1, \ldots, n-1$, reading from left to 
right, our rook theory model will give us combinatorial interpretations 
for the $\mathbf{Sf}_{n,k}(p,q)$s and the $\mathbf{cf}_{n,k}(p,q)$s.  
  We shall develop general recursions 
for the analogue of file and rook numbers in this model which will 
specialize to give combinatorial proofs 
of the recursions (\ref{pqcfrec}) and (\ref{pqSfrec}). 
Similarly, we shall give combinatorial 
proofs of two general product formulas in
this model which will specialize to give combinatorial proofs of  
(\ref{pqcfdef}) and (\ref{pqSfdef}). In Section 
3, we shall give a combinatorial proof that 
the infinite matrices $||\mathbf{Sf}_{n,k}(p,q)||_{n,k \geq 0}$ 
and $||\mathbf{sf}_{n,k}(p,q)||_{n,k \geq 0}$ are inverses of each other. 
In Section 4, we shall give various generating functions 
and identities for the $\mathbf{Sf}_{n,k}(p,q)$s and the $\mathbf{cf}_{n,k}(p,q)$s.

\section{A rook theory model for the 
$\mathbf{Sf}_{n,k}(p,q)$s and the $\mathbf{cf}_{n,k}(p,q)$s.}

In this section, we shall develop a new type of rook theory 
model which is appropriate to interpret 
the $\mathbf{Sf}_{n,k}(p,q)$s and the $\mathbf{cf}_{n,k}(p,q)$s.  
A Ferrers board $B=F(b_1, \ldots, b_n)$ is 
a board whose column heights are $b_1, \ldots, b_n$, reading 
from left to right, such that $0\leq b_1 \leq b_2 \leq \cdots \leq b_n$. 
We shall let $B_n$ denote the Ferrers board $F(0,1, \ldots, n-1)$. 
For example, the Ferrers board $B = F(2,2,3,5)$ is 
pictured on the left of Figure \ref{fig:Ferrers} 
and the Ferrers board $B_4$ is pictured on the right of 
Figure \ref{fig:Ferrers}. 

\fig{Ferrers}{Ferrers boards.}

Classically, there are two type of rook placements that we 
consider on a Ferrers board $B$.  First we let 
$\mathcal{N}_k(B)$ be the set of all placements of 
$k$ rooks in $B$ such that no two rooks lie in the same 
row or column.  We shall call an element  of 
$\mathcal{N}_k(B)$ a placement of $k$ non-attacking rooks 
in $B$ or just a rook placement for short.  We let 
$\mathcal{F}_k(B)$ be the set of all placements of 
$k$ rooks in $B$ such that no two rooks lie in the same 
column.  We shall call an element  of 
$\mathcal{F}_k(B)$ a file placement of $k$ rooks 
in $B$.   
Thus file placements differ from rook placements 
in that file placements allow 
two rooks to be in the same row.  For example, 
we exhibit a placement of 3 non-attacking rooks 
in $F(2,2,3,5)$ on the left in Figure 
\ref{fig:placements} and a file placement of 3 rooks on 
the right in  Figure \ref{fig:placements}.

\fig{placements}{Examples of rook and file placements.}

Given a Ferrers board $B = F(b_1, \ldots, b_n)$, we 
define the $k$-th rook number of $B$ to be 
$r_k(B) = |\mathcal{N}_k(B)|$ and the $k$-th file number 
of $B$ to be $f_k(B) = |\mathcal{F}_k(B)|$. Then the rook 
theory interpretation of the classical Stirling numbers is 
\begin{eqnarray*}
S_{n,k} &=& r_{n-k}(B_n) \ \mbox{for all}\ 1 \leq k \leq n \ \mbox{and} \\
c_{n,k} &=& f_{n-k}(B_n) \ \mbox{for all}\ 1 \leq k \leq n.
\end{eqnarray*}

Our idea is to modify the sets $\mathcal{N}_k(B)$ and 
$\mathcal{F}_k(B)$ to replace rooks with Fibonacci tilings. 
The analogue of file placements is very straightforward. 
That is, if $B=F(b_1, \ldots, b_n)$, then we let 
$\mathcal{FT}_k(B)$ denote the set of all configurations such that 
there are $k$ columns $(i_1, \ldots, i_k)$ of $B$ where 
$1 \leq i_1 < \cdots < i_k \leq n$ such that in each 
column $i_j$, we have placed one of the tilings $T_{i,j}$ for the Fibonacci 
number $F_{b_{i_j}}$.  We shall call such a configuration 
a Fibonacci file placement and denote it by 
$$P = ((c_{i_1},T_{i_1}),  \ldots, (c_{i_k},T_{i_k})).$$
 Let 
$\mathrm{one}(P)$ denote the number of tiles of height 1 that appear 
in $P$ and $\mathrm{two}(P)$ denote the number of tiles of height 2 that appear 
in $P$.  We then define the weight of $P$, $WF(P,p,q)$, to be 
$q^{\mathrm{one}(P)}p^{\mathrm{two}(P)}$.  For example, we have 
pictured an element $P$ of $\mathcal{FT}_3(F(2,3,4,4,5))$ in 
Figure \ref{fig:Fibfile} whose weight is $q^7 p^2$.

\fig{Fibfile}{A Fibonacci file placement.}

We define the $k$-th $p,q$-Fibonacci file polynomial of $B$, $\mathbf{fT}_k(B,p,q)$, 
by setting 
\begin{equation}
 \mathbf{fT}_k(B,p,q) = \sum_{P \in \mathcal{FT}_k(B)} WF(P,p,q).
\end{equation}
If $k =0$, then the only element of $\mathcal{FT}_k(B)$ is the empty placement 
whose weight by definition is 1.

Then we have the following two theorems concerning 
Fibonacci file placements in Ferrers boards. 

\begin{theorem}\label{thm:Ferrersfilerec} 
Let $B =F(b_1, \ldots, b_n)$ be a Ferrers 
board where $0 \leq b_1 \leq \cdots \leq b_n$ and $b_n > 0$. 
Let $B^- = F(b_1, \ldots, b_{n-1})$. Then for all 
$1 \leq k \leq n$, 
\begin{equation}\label{Fibtilerec}
\mathbf{fT}_k(B,p,q) = \mathbf{fT}_{k}(B^-,p,q)+ F_{b_n}(p,q) \mathbf{fT}_{k-1}(B^-,p,q).
\end{equation}
\end{theorem}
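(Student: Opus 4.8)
The plan is to prove the recursion \eref{Fibtilerec} by a standard combinatorial
decomposition of the set $\mathcal{FT}_k(B)$ according to what happens in the
last (tallest) column of $B$. Since $B = F(b_1,\dots,b_n)$ is a Ferrers board,
the board $B^-=F(b_1,\dots,b_{n-1})$ is exactly $B$ with its final column of
height $b_n$ removed. I would partition the placements $P\in\mathcal{FT}_k(B)$
into two disjoint classes: those in which column $n$ is \emph{not} one of the
$k$ chosen columns, and those in which column $n$ \emph{is} one of the chosen
columns. The key observation is that the weight $WF(P,p,q)=q^{\mathrm{one}(P)}
p^{\mathrm{two}(P)}$ is multiplicative over columns, so the total contribution
of the other columns is unaffected by the presence or absence of a tiling in
column $n$.

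For the first class, forgetting column $n$ gives a weight-preserving bijection
onto $\mathcal{FT}_k(B^-)$: a placement that uses none of its $k$ tilings in
the last column restricts to a legitimate Fibonacci file placement of the same
$k$ columns on $B^-$, and conversely any element of $\mathcal{FT}_k(B^-)$
extends uniquely (by leaving column $n$ empty) to such a placement on $B$.
Hence this class contributes exactly $\mathbf{fT}_k(B^-,p,q)$. For the second
class, a placement uses column $n$ together with $k-1$ other columns; removing
column $n$ yields an element of $\mathcal{FT}_{k-1}(B^-)$, and the tiling $T$
sitting in column $n$ is an arbitrary tiling in $\mathcal{FT}_{b_n}$. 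Because
the weight factors as a product over columns, summing
$q^{\mathrm{one}(T)}p^{\mathrm{two}(T)}$ over all tilings $T\in\mathcal{FT}_{b_n}$
produces precisely the factor $F_{b_n}(p,q)$, while summing the weights of the
remaining $k-1$ columns produces $\mathbf{fT}_{k-1}(B^-,p,q)$. Thus this class
contributes $F_{b_n}(p,q)\,\mathbf{fT}_{k-1}(B^-,p,q)$, and adding the two
classes gives \eref{Fibtilerec}.

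The only mild subtlety, and the step I would state most carefully, is the
definition of $F_{b_n}(p,q)$ when $b_n$ is small. The generating function
$F_m(p,q)=\sum_{T\in\mathcal{FT}_m}q^{\mathrm{one}(T)}p^{\mathrm{two}(T)}$ with
$F_0(p,q)=1$ is exactly the sum of tiling weights over column $n$, so the
bijective argument goes through for every value $b_n\geq 1$; the hypothesis
$b_n>0$ guarantees the last column is nonempty and hence that it can carry a
tiling. I would also note that multiplicativity of $WF$ over columns is
immediate from the definitions of $\mathrm{one}(P)$ and $\mathrm{two}(P)$ as
totals over all columns, so no real computation is required—the argument is a
clean sign-free bijection with a weight-generating sum in the second case. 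I
expect the main obstacle to be purely expository: making the column-removal
bijection and the factorization of the weight fully rigorous while handling the
boundary cases $k=n$ and small $b_n$ uniformly, rather than any genuine
technical difficulty.
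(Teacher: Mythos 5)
Your proposal is correct and follows essentially the same argument as the paper: classify placements in $\mathcal{FT}_k(B)$ by whether the last column carries a tiling, with the empty case contributing $\mathbf{fT}_k(B^-,p,q)$ via column removal and the tiled case contributing the factor $F_{b_n}(p,q)$ times $\mathbf{fT}_{k-1}(B^-,p,q)$. Your added remarks on weight multiplicativity and boundary cases are fine but do not change the substance.
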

\begin{proof}
It is easy to see that the right-hand side (\ref{Fibtilerec}) 
is just the result of classifying the Fibonacci 
file placements $P$ in  $\mathcal{FT}_k(B)$ by whether there is a tiling  
in the last column. If there is no tiling in the last column of $P$, 
then removing the last column of  $P$ produces 
 an element of $\mathcal{FT}_k(B^-)$. Thus such placements 
contribute $\mathbf{fT}_{k}(B^-,p,q)$ to $\mathbf{fT}_{k}(B,p,q)$. 
If there is a tiling in 
the last column, then the Fibonacci file placement  
that results by removing the last column is an 
element of $\mathcal{FT}_{k-1}(B^-)$ and the sum of 
the weights of the possible 
Fibonacci tilings of height $b_n$ for the last column 
is $F_{b_n}(p,q)$. Hence such placements 
contribute $F_{b_n}(p,q) \mathbf{fT}_{k-1}(B^-,p,q)$ to 
$\mathbf{fT}_{k}(B,p,q)$. 
\end{proof}

If $B=F(b_1, \ldots, b_n)$ is a Ferrers board, 
then we let $B_x$ denote the board that results by 
adding $x$ rows of length $n$ below $B$.  We label 
these rows from top to bottom with the numbers 
$1,2, \ldots, x$. We shall call  
the line that separates $B$ from these $x$ rows the {\em bar}. 
A mixed file placement $P$ on the board $B_x$ consists 
of picking for each column $b_i$ either (i) a Fibonacci tiling 
$T_i$ of height $b_i$ above the bar or (ii) picking 
a row $j$ below the bar to place a rook in the cell in row $j$ 
and column $i$.  Let $\mathcal{M}_n(B_x)$ denote set of all 
mixed rook placements on $B$. For any $P \in \mathcal{M}_n(B_x)$, 
we let $\mathrm{one}(P)$ denote the number of tiles of height 1 that appear 
in $P$ and $\mathrm{two}(P)$ denote the set tiles of height 2 that appear 
in $P$.  We then define the weight of $P$, $WF(P,p,q)$, to be 
$q^{\mathrm{one}(P)}p^{\mathrm{two}(P)}$.
For example, 
Figure \ref{fig:mixed} pictures a mixed placement $P$ in 
$B_x$ where $B = F(2,3,4,4,5,5)$ and $x$ is 9 such that 
$WF(P,p,q) =  q^7p^2$. 

\fig{mixed}{A mixed file placement.}

Our next theorem results from counting 
$\sum_{P \in \mathcal{M}_n(B_x)} WF(P,p,q)$ in two different ways. 

\begin{theorem}\label{thm:Ferrersfileprod}
Let $B =F(b_1, \ldots, b_n)$ be a Ferrers 
board where $0 \leq b_1 \leq \cdots \leq b_n$. 
\begin{equation}\label{Fibfileprod}
(x+F_{b_1}(p,q))(x+F_{b_2}(p,q)) \cdots (x+F_{b_n}(p,q)) =
\sum_{k=0}^n \mathbf{fT}_k(B,p,q) x^{n-k}.
\end{equation}
\end{theorem}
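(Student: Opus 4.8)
The plan is to establish \eqref{Fibfileprod} by evaluating the generating sum $\sum_{P \in \mathcal{M}_n(B_x)} WF(P,p,q)$ in two different ways, exactly as announced just before the statement, and then to upgrade the resulting numerical equality to an identity of polynomials in $x$. Throughout I would keep $x$ a fixed non-negative integer, so that the augmented board $B_x$ and the set $\mathcal{M}_n(B_x)$ are genuinely defined as combinatorial objects; the passage to a polynomial identity is reserved for the very last step.

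First I would compute the sum column by column to recover the product side. A mixed file placement $P$ is assembled by making, independently in each column $i$, one of two kinds of choice: either place a rook in one of the $x$ rows below the bar, or place a Fibonacci tiling of height $b_i$ above the bar. Because the weight $WF(P,p,q)=q^{\mathrm{one}(P)}p^{\mathrm{two}(P)}$ records only the tiles of $P$ and assigns no weight to rooks, the weight of $P$ factors as a product over columns. In column $i$ the $x$ rook choices each contribute weight $1$, for a total of $x$, while the tiling choices contribute $\sum_{T} q^{\mathrm{one}(T)}p^{\mathrm{two}(T)}=F_{b_i}(p,q)$ directly from the definition of $F_{b_i}(p,q)$ (this also correctly handles a height-$0$ column, where $F_0(p,q)=0$ and only rook choices survive). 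Since the columns are independent, this yields
\begin{equation*}
\sum_{P \in \mathcal{M}_n(B_x)} WF(P,p,q) = \prod_{i=1}^n \bigl(x + F_{b_i}(p,q)\bigr),
\end{equation*}
which is the left-hand side of \eqref{Fibfileprod}.

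Next I would recompute the same sum by classifying each $P$ according to the set of columns holding a tiling rather than a rook. If $P$ has tilings in exactly $k$ columns, then restricting $P$ to those columns gives an element of $\mathcal{FT}_k(B)$, and conversely every element of $\mathcal{FT}_k(B)$ arises in this way. The remaining $n-k$ columns each carry a rook in one of the $x$ rows; as rooks are weightless these independent choices contribute a factor $x^{n-k}$ and nothing to the weight. Summing the tiling weights over all of $\mathcal{FT}_k(B)$ gives $\mathbf{fT}_k(B,p,q)$, so placements with exactly $k$ tilings contribute $\mathbf{fT}_k(B,p,q)\,x^{n-k}$, and summing over $0 \le k \le n$ produces the right-hand side of \eqref{Fibfileprod}.

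Equating the two evaluations proves the identity for every non-negative integer $x$. The only place requiring any care — and the step I expect to be the main (if modest) obstacle — is finishing the argument, since $B_x$ is literally meaningful only for integer $x$. I would resolve this by noting that both sides of \eqref{Fibfileprod} are polynomials in $x$ of degree $n$ with coefficients in $\mathbb{Q}[p,q]$; as they agree at the infinitely many values $x=0,1,2,\dots$, they must coincide as polynomials, which gives the stated identity.
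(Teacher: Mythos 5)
Your proof is correct and follows essentially the same route as the paper: both evaluate $\sum_{P \in \mathcal{M}_n(B_x)} WF(P,p,q)$ for a fixed positive integer $x$ first column by column to obtain the product, then by classifying placements according to which columns carry tilings to obtain $\sum_{k} \mathbf{fT}_k(B,p,q)x^{n-k}$, and finally invoke the polynomial identity argument (the paper uses agreement at $n+1$ values, you use agreement at infinitely many — an immaterial difference). Your extra remarks, such as handling height-$0$ columns via $F_0(p,q)=0$, are consistent with the paper's conventions and only add care, not a different method.
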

\begin{proof}
Since both sides of (\ref{Fibfileprod}) are polynomials of 
degree $n$, it is enough to show that there are 
$n+1$ different values of $x$ for which the two sides are equal. 
In fact, we will show that the two sides are equal for 
any positive integer $x$. 

Thus fix $x$ to be a positive integer and consider 
the sum $S=\sum_{P \in \mathcal{M}_n(B_x)} WF(P,p,q)$. 
It is clear that each column of $b_i$ of $B$ contributes 
a factor of $x+ F_{b_i}(p,q)$ to $S$ so that 
$$S = \prod_{i=1}^n (x +  F_{b_i}(p,q)).$$

On the other 
hand, suppose that we fix a Fibonacci file placement  
$P \in \mathcal{FT}_k(B)$. 
Then we want to compute $S_P = \sum_{Q \in \mathcal{M}_n(B), 
Q \cap B = P} WF(Q,p,q)$ which is the sum of $WF(Q,p,q)$ over all 
mixed placements $Q$ such that $Q$ intersect $B$ equals $P$. 
It it easy to see that such a $Q$ arises by choosing 
a rook to be placed below the bar for each column 
that does not contain a tiling.  Since there are $x^{n-k}$ ways to 
do this, it follows that 
$\displaystyle S_P =WF(P,p,q) x^{n-k}$. 
Hence it follows that 
\begin{eqnarray*}
S &=&  \sum_{k=0}^n \sum_{P \in \mathcal{FT}_k(B)} S_P \\
&=& \sum_{k=0}^n x^{n-k} \sum_{P \in \mathcal{FT}_k(B)} WF(P,p,q) \\
&=& \sum_{k=0}^n \mathbf{fT}_k(B,p,q) \ x^{n-k}.
\end{eqnarray*}
\end{proof}

We should note that neither the proof of Theorem 
\ref{thm:Ferrersfilerec} nor \ref{thm:Ferrersfileprod} depended 
on the fact that $b_1 \leq b_2 \leq \ldots \leq b_n$. Thus 
they hold for arbitrary sequences of non-negative integers 
$(b_1, \ldots, b_n)$.

Now consider the special case of the previous two theorems 
when $B_n = F(0,1,2, \ldots, n-1)$. Then (\ref{Fibtilerec}) implies 
that 
$$\mathbf{fT}_{n+1-k}(B_{n+1},p,q) = \mathbf{fT}_{n+1-k}(B_n,p,q) + F_n(p,q) 
\mathbf{fT}_{n-k}(B_n,p,q).$$
It then easily follows that for all $0 \leq k \leq n$, 
\begin{equation}\label{cnkpqinterp}
 \mathbf{cf}_{n,k}(p,q) = \mathbf{fT}_{n-k}(B_n,p,q).
\end{equation}
Note that $\mathbf{cf}_{n,0}(p,q) = 0$ for all $n \geq 1$ since 
there are no Fibonacci file placements in 
$\mathcal{FT}_n(B_n)$ since there are only $n-1$ non-zero columns. 
Moreover such a situation, we see that (\ref{Fibtilerec}) 
implies that 
\begin{equation*}
x(x+F_1(p,q))(x+F_2(p,q))\cdots (x+F_{n-1}(p,q)) = 
\sum_{k=1}^n \mathbf{cf}_{n,k}(p,q) \ x^k.
\end{equation*}
Thus we have given a combinatorial proof of 
(\ref{pqcfdef}).

Our Fibonacci analogue of rook  placements is a slight 
variation of Fibonacci file placements. The main 
difference is that each 
tiling will cancel some of the top most cells in each column 
to its right that has not been canceled by a tiling 
which is further to the left.  Our goal is to ensure 
that if we start with a Ferrers board $B =F(b_1, \ldots, b_n)$, 
our cancellation will ensure that the number of 
uncanceled cells in the empty columns are $b_1, \ldots, b_{n-k}$, 
reading from left to right. 
That is, if $B=F(b_1, \ldots, b_n)$, then we let 
$\mathcal{NT}_k(B)$ denote the set of all configurations such that 
that there are $k$ columns $(i_1, \ldots, i_k)$ of $B$ where 
$1 \leq i_1 < \cdots < i_k \leq n$ such that the 
following conditions hold. \begin{itemize}
\item[1.] In column $c_{i_1}$, we place a Fibonacci tiling 
$T_{i,1}$ of height $b_{i_1}$ and for each $j > i_1$, 
this tiling cancels the top $b_j-b_{j-1}$ cells at the top 
of column $j$. This cancellation has the effect of 
ensuring that the number of uncanceled cells in the columns 
without tilings at this point is 
$b_1, \ldots, b_{n-1}$, reading from left to right. 
\item[2.] In column $c_{i_2}$, our cancellation due to the tiling 
in column $i_1$ ensures that there are $b_{i_2-1}$ uncanceled 
cells in column $i_2$. Then we place a Fibonacci tiling 
$T_{i,2}$ of height $b_{i_2-1}$ and for each $j > i_2$, 
we cancel the top $b_{j-1}-b_{j-2}$ cells in column $j$ 
that has not been canceled by the tiling in column $i_1$. 
This cancellation has the effect of 
ensuring that the number of uncanceled cells in the columns 
without tilings at this point 
is $b_1, \ldots, b_{n-2}$, reading from left to right.
\item[3.] In general, when we reach column $i_s$, we assume 
that the cancellation due to the tilings in columns  
$i_1, \ldots, i_{j-1}$ ensure that the number of uncanceled 
cells in the columns without tilings is $b_1, \ldots, b_{n-(s-1)}$, 
reading from left to right. Thus there will be 
$b_{i_s -(s-1)}$ uncanceled cells in column $i_s$ at this point. 
Then we place a Fibonacci tiling 
$T_{i,s}$ of height $b_{i_s-(s-1)}$ and for each $j > i_s$, 
this tiling will 
cancel the top $b_{j-(s-1)}-b_{j-s}$ cells in column 
$j$ that has not been canceled by the tilings in 
columns $i_1, \ldots, i_{s-1}$. 
This cancellation has the effect of 
ensuring that the number of uncanceled cells in columns 
without tilings at this point 
is $b_1, \ldots, b_{n-s}$, reading from left to right.
\end{itemize}

We shall call such a configuration 
a Fibonacci rook placement and denote it by 
$$P = ((c_{i_1},T_{i_1}),  \ldots, (c_{i_k},T_{i_k})).$$ 
Let $\mathrm{one}(P)$ denote the number of tiles of height 1 that appear 
in $P$ and $\mathrm{two}(P)$ denote the number of tiles of height 2 that appear 
in $P$.  We then define the weight of $P$, $WF(P,p,q)$, to be 
$q^{\mathrm{one}(P)}p^{\mathrm{two}(P)}$.  For example, on the left in 
Figure \ref{fig:Fibrook}, we have 
pictured an element $P$ of $\mathcal{NT}_3(F(2,3,4,4,6,6))$ 
whose weight is $q^4 p^2$. In 
Figure \ref{fig:Fibrook}, we have 
indicated the cells canceled by the tiling in column 
$i$ by placing an $i$ in the cell.  
We note in the special case where $B = F(0,k,2k, \ldots, (n-1)k)$, then 
our cancellation scheme is quite simple. That is, each tiling 
just cancels the top $k$ cells in each column to its right which 
has not been canceled by tilings to its left. 
For example, on the right in 
Figure \ref{fig:Fibrook}, we have 
pictured an element $P$ of $\mathcal{NT}_3(F(0,1,2,3,4,5))$ 
whose weight is $q^6 p$. Again, we have 
indicated the canceled cells by the tiling in column 
$i$ by placing an $i$ in the cell.

\fig{Fibrook}{A Fibonacci rook placement.}

We define the $k$-th $p,q$-Fibonacci rook polynomial of $B$, $\mathbf{rT}_k(B,p,q)$, 
by setting 
\begin{equation}
 \mathbf{rT}_k(B,p,q) = \sum_{P \in \mathcal{NT}_k(B)} WF(P,p,q).
\end{equation}
If $k =0$, then the only element of $\mathcal{FT}_k(B)$ is the empty placement 
whose weight by definition is 1.

Then we have the following two theorems concerning 
Fibonacci rook placements in Ferrers boards. 

\begin{theorem}\label{thm:Ferrersrookrec} 
Let $B =F(b_1, \ldots, b_n)$ be a Ferrers 
board where $0 \leq b_1 \leq \cdots \leq b_n$ and $b_n > 0$. 
Let $B^- = F(b_1, \ldots, b_{n-1})$. Then for all 
$1 \leq k \leq n$, 
\begin{equation}\label{Fibrookrec}
\mathbf{rT}_k(B,p,q) = \mathbf{rT}_{k}(B^-,p,q)+ F_{b_{n-(k-1)}}(p,q) \mathbf{rT}_{k-1}(B^-,p,q).
\end{equation}
\end{theorem}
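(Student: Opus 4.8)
The plan is to mimic the proof of Theorem~\ref{thm:Ferrersfilerec}, classifying the Fibonacci rook placements $P \in \mathcal{NT}_k(B)$ according to whether the last column (column $n$) carries a tiling. The two resulting classes should account for the two terms on the right-hand side of (\ref{Fibrookrec}).

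First I would treat the case in which column $n$ contains no tiling. Then the $k$ tiling columns satisfy $i_1 < \cdots < i_k \le n-1$, and I claim that deleting column $n$ yields a weight-preserving bijection onto $\mathcal{NT}_k(B^-)$. The key point to check is that the height of the tiling placed in each column $i_s$ is unaffected by the deletion: by the defining cancellation rule this height equals $b_{i_s-(s-1)}$, an index lying among the first $n-1$ columns, so it is computed identically in $B$ and in $B^-$. Hence this class contributes exactly $\mathbf{rT}_k(B^-,p,q)$. Next I would treat the case in which column $n$ carries a tiling. Since $i_k=n$ is then the rightmost tiling column, the invariant in the definition of $\mathcal{NT}_k(B)$ guarantees that, just before the $k$-th tiling is placed, the uncanceled non-tiling columns have heights $b_1,\dots,b_{n-(k-1)}$, so column $n$ has exactly $b_{n-(k-1)}$ uncanceled cells. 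Summing the weight over all Fibonacci tilings of that height gives the factor $F_{b_{n-(k-1)}}(p,q)$, just as in the file case. Deleting the last column together with its tiling then leaves the remaining $k-1$ tilings in columns $i_1<\cdots<i_{k-1}\le n-1$, which I would argue is an element of $\mathcal{NT}_{k-1}(B^-)$ whose tiling heights again match. Thus this class contributes $F_{b_{n-(k-1)}}(p,q)\,\mathbf{rT}_{k-1}(B^-,p,q)$, yielding the recursion.

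The main obstacle, and the only place the argument is genuinely more delicate than in Theorem~\ref{thm:Ferrersfilerec}, is verifying that these deletion maps really land in $\mathcal{NT}_k(B^-)$ and $\mathcal{NT}_{k-1}(B^-)$ with the correct heights, i.e. that the cancellation scheme is \emph{local} enough that removing the rightmost column neither alters the number of uncanceled cells beneath any surviving tiling nor changes which cells those tilings cancel in the remaining columns. I expect this to follow directly from the stated invariant that after placing the tilings in $i_1,\dots,i_{s-1}$ the uncanceled non-tiling columns read $b_1,\dots,b_{n-(s-1)}$: deleting column $n$ only suppresses cancellations that fell inside column $n$ and shortens the ambient board by one column, so the induced counts on columns $1,\dots,n-1$ are precisely those prescribed for $B^-$, making both induced placements on $B^-$ legal and weight-preserving.
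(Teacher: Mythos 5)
Your proposal is correct and follows essentially the same route as the paper's own proof: classifying placements in $\mathcal{NT}_k(B)$ by whether the last column carries a tiling, noting that the $k-1$ earlier tilings cancel the top $b_n - b_{n-(k-1)}$ cells of column $n$ so the factor $F_{b_{n-(k-1)}}(p,q)$ appears, and deleting the last column to land in $\mathcal{NT}_k(B^-)$ or $\mathcal{NT}_{k-1}(B^-)$. Your explicit verification that the cancellation scheme is local enough for the deletion maps to be weight-preserving bijections is exactly the point the paper dismisses as ``easy to see,'' so you have merely filled in detail rather than diverged.
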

\begin{proof}
It is easy to see that the right-hand side (\ref{Fibrookrec}) 
is just the result of classifying the Fibonacci 
rook placements $P$ in  $\mathcal{NT}_k(B)$ by whether there is a tiling  
in the last column. If there is no tiling in the last column of $P$, 
then removing the last column of $P$ gives 
 an element of $\mathcal{NT}_k(B^-)$. Thus such placements 
contribute $\mathbf{rT}_{k}(B^-,p,q)$ to $\mathbf{rT}_{k}(B,p,q)$. 
If there is a tiling in 
the last column, then the Fibonacci rook placement  
that results by removing the last column is an 
element of $\mathcal{NT}_{k-1}(B^-)$ and these tilings  
cancel the top $b_n-b_{n-(k-1)}$ cells of the last column. Then 
the weights of the possible 
Fibonacci tilings of height $b_{n-(k-1)}$ for the last column 
is $F_{b_{n-(k-1)}}(p,q)$. Hence such placements 
contribute $F_{b_{n-(k-1)}}(p,q) \mathbf{rT}_{k-1}(B^-,p,q)$ to $\mathbf{rT}_{k}(B,p,q)$. 
\end{proof}

We also have a product formula for Fibonacci rook 
placements in $B_n$. In this case, we have to use ideas from 
the proof of an even more general product formula 
due to Miceli and third author in \cite{MR}.

Let $B=F(b_1, \ldots, b_n)$ be  a Ferrers board and 
$x$ be a positive integer. 
Then we let $AugB_x$ denote the board where 
we start with $B_x$ and add the flip of the board $B$ about 
its baseline below the board. We shall call the 
the line that separates $B$ from these $x$ rows the {\em upper bar} 
and the line that separates the $x$ rows from the flip 
of $B$ added below the $x$ rows the {\em lower bar}. We shall 
call the flipped version of $B$ added below $B_x$ the board 
$\overline{B}$. For example, 
if $B=F(2,3,4,4,5,5)$, then the board $AugB_7$ is pictured 
in Figure \ref{fig:aug}.

\fig{aug}{An example of an augmented board $AugB_x$..}

The analogue of mixed placements in $AugB_x$ are 
more complex than the mixed placements for $B_x$. We process 
the columns from left to right. 
If we are in column 1, then we can do one of the following three things. \begin{itemize}
\item[i.] We can put a Fibonacci tiling in  
cells in column $b_1$ in $B$.  Then we must 
cancel the top-most cells in each of the columns in $B$ to its right 
so that the number of uncanceled cells in 
the columns to its right are $b_1,b_2, \ldots, b_{n-1}$, respectively, as 
we read from left to right. This means 
that we will cancel $b_i-b_{i-1}$ at the top of column $i$ in $B$ 
for $i=2, \ldots, n$. We also cancel the same number 
of cells at the bottom of the corresponding columns of $\overline{B}$.
\item[ii.] We can place a rook in any row of column $1$ that 
lies between the upper bar and lower bar.  This rook 
will not cancel anything. 
\item[iii.] We can put a flip of Fibonacci tiling 
in column $b_1$ of $\overline{B}$.  This tiling will not 
cancel anything. \end{itemize}

Next assume that when we get to column $j$, the 
number of uncanceled cells in the columns that have 
no tilings in $B$ and $\overline{B}$ are $b_1, \ldots, b_k$  for some $k$ as 
we read from left to right. Suppose there are 
$b_i$ uncanceled cells in $B$ in column $j$.  
Then we can do one of three things. \begin{itemize}
\item[i.] We can put a Fibonacci tiling of height $b_i$  in the uncanceled cells in column $j$ in $B$.  Then we must 
cancel top-most cells of the columns in $B$ to its right 
so that the number of uncanceled cells in 
the columns which have no tilings up to this point 
are $b_1,b_2, \ldots, b_{k-1}$, 
We also cancel the same number 
of cells at the bottom of the corresponding columns of $\overline{B}$
\item[ii.] We can place a rook in any row of column $j$ that 
lies between the upper bar and lower bar.  This rook 
will not cancel anything. 
\item[iii.] We can put a flip of Fibonacci tiling in the $b_i$ 
uncanceled cells in column $b_j$ of $\overline{B}$.  This tiling will not 
cancel anything \end{itemize}

We let $\mathcal{M}_n(AugB_x)$ denote set of all 
mixed rook placements on $AugB_x$. For any $P \in \mathcal{M}_n(AugB_x)$, 
we let $\mathrm{one}_B(P)$ denote the number of tiles of height 1 that appear 
in $P$ that lie in $B$, 
$\mathrm{two}_B(P)$ denote the number of tiles of height 2 that appear 
in $P$ that lie in $B$, 
$\mathrm{one}_{\overline{B}}(P)$ denote the number of tiles 
of height 1 that appear 
in $P$ that lie in $\overline{B}$, 
and $\mathrm{two}_{\overline{B}}(P)$ denote the number of tiles 
of height 2 that appear 
in $P$ that lie in $\overline{B}$.  
We then define the weight of $P$, $\overline{WF}(P,p,q)$ to be 
$q^{\mathrm{one}_B(P)}p^{\mathrm{two}_B(P)} - 
q^{\mathrm{one}_{\overline{B}}(P)}p^{\mathrm{two}_{\overline{B}}(P)}$.
For example, 
Figure \ref{fig:aug2} pictures a mixed placement $P$ in 
$AugB_x$ where $B = F(2,3,4,4,5,5)$ and $x$ is 7 such that 
$\overline{WF}(P,p,q) =  q^3p^2 -q^2$. In this case 
we have put 2s in the cells that are canceled by the tiling 
in $B$ in column 2 and 4s in the cells that are canceled by 
the tiling in $B$ in column 4. Note that if 
we process the columns from left to right, 
after we have placed the tiling in column 2, 
the number of uncanceled cells in the columns which 
do not have tiling above the upper bar are 2,3,4,4,5 as 
we read from left to right in both $B$ and $\overline{B}$ 
and after we have placed the tiling in column 4, 
the number of uncanceled cells in the columns which 
do not have tilings  above the upper bar are 2,3,4,4 as 
we read from left to right in both $B$ and $\overline{B}$.

\fig{aug2}{A mixed rook placement.}

Our next theorem results from counting 
$\sum_{P \in \mathcal{M}_n(AugB_x)} \overline{WF}(P,p,q)$ 
in two different ways. 

\begin{theorem}\label{thm:Ferrersrookprod} 
Let $B =F(b_1, \ldots, b_n)$ be a Ferrers 
board where $0 \leq b_1 \leq \cdots \leq b_n$. 
\begin{equation}\label{Fibrookprod}
x^n =
\sum_{k=0}^n \mathbf{rT}_{n-k}(B,p,q) (x-F_{b_1}(p,q))(x-F_{b_2}(p,q))\cdots 
(x-F_{b_{k}}(p,q)).
\end{equation}
\end{theorem}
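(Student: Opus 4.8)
The plan is to imitate the proof of Theorem~\ref{thm:Ferrersfileprod}: fix a positive integer $x$, set $S = S(B) = \sum_{P \in \mathcal{M}_n(AugB_x)} \overline{WF}(P,p,q)$ (writing $S(B)$ to record the dependence on the board), and evaluate $S$ in two different ways. Here a column carrying a tiling in $B$ contributes its tile monomial $q^{\mathrm{one}}p^{\mathrm{two}}$, a column carrying a rook contributes a factor $x$, and a column carrying a flip tiling in $\overline{B}$ contributes the \emph{negative} of its tile monomial, so that the two halves of the augmented board enter $\overline{WF}$ with opposite signs, matching the minus sign in its definition. Since both sides of (\ref{Fibrookprod}) are polynomials in $x$ of degree $n$, it will suffice to establish the identity for every positive integer $x$.

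First I would compute $S$ by conditioning on the leftmost column and inducting on the number of columns. The three options for column $1$ are: a tiling in $B$ (whose weights sum to $F_{b_1}(p,q)$), which cancels cells to the right so that the remaining configuration is a mixed placement on the augmented board of $F(b_1,\dots,b_{n-1})$; a rook, contributing a factor $x$ and canceling nothing, leaving the augmented board of $F(b_2,\dots,b_n)$; and a flip tiling in $\overline{B}$ (whose signed weights sum to $-F_{b_1}(p,q)$), which also cancels nothing and leaves $F(b_2,\dots,b_n)$. This gives
\begin{equation*}
S(F(b_1,\dots,b_n)) = F_{b_1}(p,q)\,S(F(b_1,\dots,b_{n-1})) + x\,S(F(b_2,\dots,b_n)) - F_{b_1}(p,q)\,S(F(b_2,\dots,b_n)).
\end{equation*}
Both sub-boards have $n-1$ columns, so the inductive hypothesis $S \equiv x^{n-1}$ forces the two $F_{b_1}(p,q)$ terms to cancel, leaving $S = x\cdot x^{n-1} = x^n$; the base case $n=0$ is immediate. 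Thus the first evaluation yields $S = x^n$.

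Second I would compute $S$ by freezing the set of columns of $B$ that carry tilings. By the cancellation rules these tilings form precisely a Fibonacci rook placement $P' \in \mathcal{NT}_j(B)$ for some $j$, contributing the positive weight $WF(P',p,q)$, and the scheme guarantees that the $n-j$ tiling-free columns have uncanceled heights $b_1,\dots,b_{n-j}$, read left to right, exactly as in the definition of $\mathcal{NT}_j(B)$. Each free column then independently contributes either a rook ($x$ ways) or a flip tiling of the available height $b_i$ in $\overline{B}$ (whose signed weights sum to $-F_{b_i}(p,q)$), hence the factor $x - F_{b_i}(p,q)$; since flip tilings and rooks cancel nothing, the free columns factor independently. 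Summing first over the free columns and then over $P'$ gives
\begin{equation*}
S = \sum_{j=0}^n \mathbf{rT}_j(B,p,q)\prod_{i=1}^{n-j}\bigl(x - F_{b_i}(p,q)\bigr) = \sum_{k=0}^n \mathbf{rT}_{n-k}(B,p,q)\prod_{i=1}^{k}\bigl(x - F_{b_i}(p,q)\bigr)
\end{equation*}
after reindexing $k = n-j$. Equating the two evaluations of $S$ and applying the polynomial argument then gives (\ref{Fibrookprod}).

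The step I expect to be the main obstacle is the cancellation bookkeeping underlying both evaluations, which is exactly where the argument of Miceli and the third author~\cite{MR} is needed. In the first evaluation I must verify that conditioning on column $1$ genuinely reduces the count to the augmented boards of $F(b_1,\dots,b_{n-1})$ and $F(b_2,\dots,b_n)$, i.e.\ that the cancellation performed by later $B$-tilings is precisely the one prescribed for the smaller board (and likewise along the flipped copy $\overline{B}$). In the second I must verify that freezing the $B$-tilings leaves exactly the heights $b_1,\dots,b_{n-j}$ in the empty columns and that the matching cancellation at the bottom of $\overline{B}$ makes flip tilings of those same heights available. The sign convention, namely weighting flip tilings in $\overline{B}$ oppositely to tilings in $B$, is what forces the $\pm F_{b_1}(p,q)$ terms to cancel in the first evaluation and what produces the factors $x - F_{b_i}(p,q)$ rather than $x + F_{b_i}(p,q)$ in the second; checking that this convention remains consistent as the columns are processed from left to right is the delicate point.
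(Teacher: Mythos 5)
Your proposal is correct and follows essentially the same route as the paper: both fix a positive integer $x$, double-count the signed sum $\sum_{P \in \mathcal{M}_n(AugB_x)} \overline{WF}(P,p,q)$ over mixed placements on the augmented board (reading the weight multiplicatively, with each flip tiling in $\overline{B}$ contributing the negative of its monomial, which is the reading the paper's own proof uses), obtain $x^n$ from the per-column contribution $F_{b_i}(p,q) + x - F_{b_i}(p,q) = x$, and obtain the right-hand side by conditioning on the Fibonacci rook placement $P \in \mathcal{NT}_{n-k}(B)$ with the $k$ tiling-free columns each contributing $x - F_{b_i}(p,q)$, before invoking the polynomial-identity argument. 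The only difference is cosmetic: you organize the first evaluation as a recursion $S(F(b_1,\ldots,b_n)) = F_{b_1}(p,q)\,S(F(b_1,\ldots,b_{n-1})) + (x - F_{b_1}(p,q))\,S(F(b_2,\ldots,b_n))$ with induction on $n$, whereas the paper processes the columns left to right directly, and the cancellation checks you flag as delicate are exactly the routine verifications already built into the paper's cancellation scheme.
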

\begin{proof}
Since both sides of (\ref{Fibrookprod}) are polynomials of 
degree $n$, it is enough to show that there are 
$n+1$ different values of $x$ for which the two sides are equal. 
In fact, we will show that the two sides are equal for 
any positive integer $x$. 

Thus, fix $x$ to be a positive integer and consider 
the sum $S=\sum_{P \in \mathcal{M}_n(AugB_x)} \overline{WF}(P,p,q)$. 
First we consider the contribution of each column as we proceed 
from left to right. Given our three choices 
in column 1, the contribution of our choices of the tilings of 
height $b_1$ that we can place 
in column 1 of $B$ is $F_{b_1}(p,q)$, the contribution 
of our choices 
of placing a rook in between the upper  bar and the lower is $x$, 
and the contribution of our choices of the tilings of 
height $b_1$ that we can place 
in column 1 of $\overline{B}$ is $-F_{b_1}(p,q)$. 
Thus the contribution of our choices in 
column 1 to $S$ is $F_{b_1}(p,q)+x -F_{b_1}(p,q) = x$. 

In general, after we have processed our choices in 
the first $j$ columns, our cancellation scheme ensures 
that the number of uncanceled cells in $B$ and $\overline{B}$ in 
the $j$-th column is $b_i$ for some $i \leq j$. 
Thus given our three choices 
in column j, the contribution of our choices of the tilings of 
height $b_i$ that we can place in column $j$ of $B$ is $F_{b_i}(p,q)$, 
our choices 
of placing a rook in between the upper  bar and the lower is $x$, 
and the contribution of our choices of the tilings of 
height $b_i$ that we can place 
in column $j$ of $\overline{B}$ is $-F_{b_i}(p,q)$. 
Thus the contribution of our choices in 
column $j$ to $S$ is $F_{b_i}(p,q)+x -F_{b_i}(p,q) = x$.
It follows that $S = x^n$.

On the other 
hand, suppose that we fix a Fibonacci rook placement  
$P \in \mathcal{NT}_{n-k}(B)$. 
Then we want to compute the $S_P = \sum_{Q \in \mathcal{M}_n(AugB_x), 
Q \cap B = P} \overline{WF}(Q,p,q)$ which is the sum 
of $\overline{WF}(Q,p,q)$ over all 
mixed placements $Q$ such that $Q$ intersect $B$ equals $P$. 
Our cancellation scheme ensures that the number of uncanceled cells in $B$ and $\overline{B}$ 
in the $k$ columns that do not contain tilings in $P$ is 
$b_1, \ldots, b_k$ as we read from right to left. 
For each such $1 \leq i \leq k$, the factor that 
arises from either choosing a rook to be placed 
in between the upper bar and lower bar or a flipped 
Fibonacci tiling of height $b_i$ in $\overline{B}$ is 
$x-F_{b_i}(p,q)$. It follows that 
$$S_P =WF(P,p,q) \prod_{i=1}^k (x-F_{b_i}(p,q)).$$
Hence it follows that 
\begin{eqnarray*}
S &=&  \sum_{k=0}^n \sum_{P \in \mathcal{NT}_{n-k}(B)} S_P \\
&=& \sum_{k=0}^n \left( \prod_{i=1}^k (x-F_{b_i}(p,q))\right)  
\sum_{P \in \mathcal{NT}_k(B)} WF(P,p,q) \\
&=& \sum_{k=0}^n \mathbf{rT}_{n-k}(B,p,q) \left( \prod_{i=1}^k (x-F_{b_i}(p,q))\right).
\end{eqnarray*}
\end{proof}

 Now consider the special case of the previous two theorems 
when $B_n = F(0,1,2, \ldots, n-1)$. Then (\ref{Fibrookrec}) implies 
that 
$$\mathbf{rT}_{n+1-k}(B_{n+1},p,q) = \mathbf{rT}_{n+1-k}(B_n,p,q) + F_k(p,q) 
\mathbf{rT}_{n-k}(B_n,p,q).$$
It then easily follows that for all $0 \leq k \leq n$, 
\begin{equation}\label{Sfnkpqinterp}
 \mathbf{Sf}_{n,k}(p,q) = \mathbf{rT}_{n-k}(B_n,p,q).
\end{equation}
Note that $\mathbf{Sf}_{n,0}(p,q) = 0$ for all $n \geq 1$ since 
there are no Fibonacci rook placements in 
$\mathcal{NT}_n(B_n)$ since there are only $n-1$ non-zero columns. 
Moreover such a situation, we see that (\ref{Fibrookprod}) 
implies that 
\begin{equation*}
 x^n = 
\sum_{k=1}^n \mathbf{Sf}_{n,k}(p,q) x(x-F_1(p,q))(x-F_2(p,q))\cdots (x-F_{k-1}(p,q))
\end{equation*}
Thus we have given a combinatorial proof of 
(\ref{pqSfdef}).

\section{A combinatorial proof that $||\mathbf{Sf}_{n,k}(p,q)||^{-1} 
= ||\mathbf{sf}_{n,k}(p,q)||$.}

In this section, we shall give a combinatorial proof 
that the infinite matrices $||\mathbf{Sf}_{n,k}(p,q)||_{n,k \geq 0}$ 
and $||\mathbf{sf}_{n,k}(p,q)||_{n,k \geq 0}$ are inverses of each other.

Since the matrices  $||\mathbf{Sf}_{n,k}(p,q)||_{n,k \geq 0}$ 
and $||\mathbf{sf}_{n,k}(p,q)||_{n,k \geq 0}$ are lower triangular, 
we must prove that for any $0 \leq k \leq n$, 
\begin{equation}\label{inverse1}
\sum_{j =k}^n \mathbf{Sf}_{n,j}(p,q)\mathbf{sf}_{j,k}(p,q) = \chi(n =k)
\end{equation}
where, for a statement $A$, we let $\chi(A) =1$ if $A$ is true 
and $\chi(A) =0$ if $A$ is false. Given our combinatorial 
interpretation of $\mathbf{Sf}_{n,j}(p,q)$ and $\mathbf{sf}_{j,k}(p,q)$,  
we must show that 
\begin{equation}\label{inverse2}
 \sum_{j =k}^n \sum_{(P,Q) \in 
\mathcal{NT}_{n-j}(B_n) \times \mathcal{FT}_{j-k}(B_j)}
(-1)^{j-k}WF(P,p,q) WF(Q,p,q) = \chi(n =k).
\end{equation}
Note that if $(P,Q) \in 
\mathcal{NT}_{n-j}(B_n) \times \mathcal{FT}_{j-k}(B_j)$, 
then the sign associated with $(P,Q)$ is just 
$(-1)^{\mbox{no. of rooks in $Q$}}$ so that 
we define $sgn(P,Q) = (-1)^{\mbox{no. of rooks in $Q$}}$.

In the case when $n=k$, (\ref{inverse2}) 
reduces to the fact that 
$$1=\sum_{(P,Q) \in 
\mathcal{NT}_{n-n}(B_n) \times \mathcal{FT}_{n-n}(B_n)}
(-1)^{n-n}WF(P,p,q) WF(Q,p,q).$$
This is clear since the only $P$ in $\mathcal{NT}_{n-n}(B_n)$ is the empty 
configuration and $WF(P,p,q) =1$ and the only $Q$ in 
$\mathcal{FT}_{n-n}(B_n)$ is the empty 
configuration and $WF(Q,p,q) =1$. Moreover in such a case 
$sgn(P,Q) =1$. 

If $n \geq 1$ and $k=0$, the result is also immediate. In 
that case our sum becomes 
$$\sum_{j=0}^n \sum_{(P,Q) \in 
\mathcal{NT}_{n-j}(B_n) \times \mathcal{FT}_{j-0}(B_j)}
(-1)^{n-n}WF(P,p,q) WF(Q,p,q).$$
However, for all $j \geq 1$, $\mathcal{FT}_{j}(B_j)$ is empty 
because you can place at most $j-1$ file tilings on $B_j$.  
In the case when $j=0$, then $\mathcal{NT}_{n-0}(B_n)$ is empty so 
that the entire sum is empty. Hence for all $n \geq 1$, 
$$\sum_{j =0}^n \mathbf{Sf}_{n,j}(p,q)\mathbf{sf}_{j,0}(p,q) =0.$$

Thus we can assume that $n > k \geq 1$. 
Our goal is to define an involution 
$$I_{n,k}: \bigcup_{j=k}^n  
\mathcal{NT}_{n-j}(B_n) \times \mathcal{FT}_{j-k}(B_j) \rightarrow 
\bigcup_{j=k}^n \mathcal{NT}_{n-j}(B_n) \times \mathcal{FT}_{j-k}(B_j)$$
such that for all for
$(P,Q)\in \bigcup_{j=k}^n \mathcal{NT}_{n-j}(B_n) \times \mathcal{FT}_{j-k}(B_j)$, $I_{n,k}(P,Q) = (P',Q') \neq (P,Q)$, $sgn(P,Q) = -sgn(P',Q')$, 
and $WF(P,p,q)WF(Q,p,q) = WF(P',p,q)WF(Q',p,q)$.

We shall proceed 
by induction on $n$. The base case of 
our induction is $n =2$ and $k=1$. In that case (\ref{inverse2})  
becomes 
$$
\sum_{j =1}^2 \sum_{(P,Q) \in 
\mathcal{NT}_{2-j}(B_2) \times \mathcal{FT}_{j-1}(B_j)}
(-1)^{j-1}WF(P,p,q) WF(Q,p,q).$$
However, in the case $j=2$, there is a single pair in 
$\mathcal{NT}_{2-1}(B_2) \times \mathcal{FT}_{1-1}(B_1)$ which 
is pictured on the left in Figure \ref{fig:base} and 
there is a single pair in 
$\mathcal{NT}_{2-2}(B_2) \times \mathcal{FT}_{2-1}(B_2)$ which 
is pictured on the right in Figure \ref{fig:base}.  These 
two pairs each have weight $q$ but have opposite signs 
so that our involution $I_{2,1}$ just 
maps each pair to the other pair. 

\fig{base}{$I_{2,1}$.}

Thus assume that $n > 2$ and $n >k \geq 1$. We define 
$I_{n,k}$ via 3 cases. \\
\ \\
{\bf Case 1.} $(P,Q) \in \mathcal{NT}_{n-j}(B_n) 
\times \mathcal{FT}_{j-k}(B_j)$ and there is a tiling in 
the last column of $P$.\\

In this case, there are $n-j-1$ tilings in the first 
$n-1$ columns of $P$ so that there are $n-1-(n-j-1) =j$ 
uncanceled cells in the last column of $P$. Note that the 
last column of $Q$ is of height $j-1$. Then 
we let $I_{n,k}(P,Q) = (P',Q')$ where 
$P'$ arises from $P$ by removing the tiling in the last 
column of $P$ and $Q'$ results from $Q$ by taking 
the tiling in the last column of $P$ and placing it at 
the end of $Q$.  It will then be the case 
that $(P',Q') \in \mathcal{NT}_{n-(j+1)}(B_n) 
\times \mathcal{FT}_{j+1-k}(B_{j+1})$. Note that 
$sgn(P,Q) =(-1)^{j-k}$ and $sgn(P',Q') =(-1)^{j+1-k}$. 
In addition, since we did not change the total number of 
tiles of size 1 and 2, we have that 
$WF(P,p,q)WF(Q,p,q) = WF(P',p,q)WF(Q',p,q)$.\\
\ \\
{\bf Case 2.} $(P,Q) \in \mathcal{NT}_{n-j}(B_n) 
\times \mathcal{FT}_{j-k}(B_j)$ and there is no tiling in 
the last column of $P$ but there is a tiling in the 
last column of $Q$.\\

In this case, there are $n-j$ tilings in the first 
$n-1$ columns of $P$ so that there are $n-1-(n-j) =j-1$ 
uncanceled cells in the last column of $P$. Note that the 
last column of $Q$ is of height $j-1$ in this case. 
Then 
we let $I_{n,k}(P,Q) = (P',Q')$ where 
$P'$ arises from $P$ by taking the tiling 
in the last column of $Q$ and placing it in the $j-1$ uncanceled 
cells of the last column of $P$ and $Q'$ results from $Q$ removing 
its last column. 

It will then be the case 
that $(P',Q') \in \mathcal{NT}_{n-(j-1)}(B_n) 
\times \mathcal{FT}_{j-1-k}(B_{j-1})$. Note that 
$sgn(P,Q) =(-1)^{j-k}$ and $sgn(P',Q') =(-1)^{j-1-k}$. 
Since we did not change the total number of 
tiles of size 1 and 2, we have that 
$WF(P,p,q)WF(Q,p,q) = WF(P',p,q)WF(Q',p,q)$.\\

It is easy to see that if $(P,Q)$ is in Case 1, then 
$I_{n,k}(P,Q)$ is in Case 2 and if $(P,Q)$ is in Case 2, 
then $I_{n,k}(P,Q)$ is in Case 1.  An example of 
these two cases is given in Figure \ref{fig:CASEI} 
where the pair $(P,Q)$ pictured at the top is in 
$\mathcal{NT}_{6-3}(B_6) 
\times \mathcal{FT}_{3-2}(B_3)$ and satisfies the 
conditions of Case 1 and the pair $(P',Q')$ pictured at the bottom is in 
$\mathcal{NT}_{6-4}(B_6) 
\times \mathcal{FT}_{4-2}(B_4)$ and satisfies the 
conditions of Case 2. 

\fig{CASEI}{The involution $I_{n,k}$.}

It follows that sum of $sgn(P,Q)WF(P,p,q)WF(Q,p,q)$ over all 
$(P,Q)$ satisfying the conditions of Case 1 or Case 2 for some $j$ 
is 0.  Thus we have one last case to consider. \\
\ \\
{\bf Case 3.} $(P,Q) \in \mathcal{NT}_{n-j}(B_n) 
\times \mathcal{FT}_{j-k}(B_j)$ and there is no tiling in 
the last column of $P$ and there is no tiling in the 
last column of $Q$.\\

In this case, we let $(P',Q')$ be the result  of 
removing the last column of both $P$ and $Q$. It follows 
that $(P',Q') \in \mathcal{NT}_{n-1-(j-1)}(B_{n-1}) 
\times \mathcal{FT}_{(j-1)-(k-1)}(B_{j-1})$. 
It is easy to see that the map $\theta$ which sends 
$(P,Q) \rightarrow (P',Q')$ 
is a sign preserving and weight preserving bijection  
from the set of all 
$(P,Q) \in \bigcup_{j=k}^n \mathcal{NT}_{n-j}(B_n) 
\times \mathcal{FT}_{j-k}(B_j)$ which are in case 3 onto 
the set $\bigcup_{i=k-1}^{n-1} \mathcal{NT}_{n-1-i}(B_{n-1}) 
\times \mathcal{FT}_{i-(k-1)}(B_i)$. 
An example of the $\theta$ maps is given in Figure 
\ref{fig:theta}. 
But then we know 
by induction that 
$$\sum_{(P',Q') \in \bigcup_{i=k-1}^{n-1} \mathcal{NT}_{n-1-i}(B_{n-1}) 
\times \mathcal{FT}_{i-(k-1)}(B_i)} sgn(P',Q') WF(P',p,q)WF(Q',p,q)
 =0.$$

This shows that (\ref{inverse2}) holds which is what we wanted to prove.

\fig{theta}{An example of the $\theta$ map.}

\noindent {\bf Remark.}  It is to see that the 
proofs in Sections 2 and 3 did not use any particular  
properties of the Fibonacci tilings. Thus for example, 
let $\mathcal{PT}_n$ denote the set of all tilings of a column 
of height $n$ with tiles of height 1, 2, or 3 such that 
the bottom tile is a size 1.

we could consider numbers defined 
$P_1=P_2 =1$, $P_3 =2$ and $P_n = P_{n-1}+P_{n-2}+P_{n-3}$ for $n \geq 4$. 
Then it then easy to see that $P_n$ equals the number of 
tilings of height $n$ using tiles of size 1, 2, and 3 such 
that the bottom tile is of size 1.  We will call such tilings 
$P$-tilings. Given tiling $T \in \mathcal{PT}_n$, we let 
$\mathrm{one}(T)$ denote the number of tiles of height 1 in $T$, 
$\mathrm{two}(T)$ denote the number of tiles of height 2 in $T$, and
$\mathrm{three}(T)$ denote the number of tiles of height 3 in $T$. 
Then we let 
$$P_n(p,q,r) = \sum_{T \in \mathcal{PT}_n}q^{\mathrm{one}(T)}p^{\mathrm{two}(T)} r^{\mathrm{three}(T)}.$$

For example, 
Figure \ref{fig:Ptilings} gives the set of tiles for 
$\mathcal{PT}_1, \ldots, \mathcal{PT}_5$. 

\fig{Ptilings}{$P$-tilings for $\mathcal{PT}_1, \ldots, \mathcal{PT}_5$.}

Given any such $P$ tiling $T$, we let 
$\mathrm{one}(T)$ denote the number of tiles of height 1 in $T$, 
$\mathrm{two}(T)$ denote the number of tiles of height 2 in $T$, and
$\mathrm{three}(T)$ denote the number of tiles of height 3 in $T$. 
The we can define $P_n(q,p,r)$ as the sum of weights $WP(T)$ 
of all $P$-tilings $T$ of height $n$ where 
$WP(T) =q^{\mathrm{one}(T)} p^{\mathrm{two}(T)} r^{\mathrm{three}(T)}$ 
over all tilings for $P_n$.  It is then easy to see that 
$P_1(q,p,r)=q$, $P_2(q,p,r) =q^2$, $P_3(q,p,r)=q^3+qp$, and 
$$P_n(q,p,r)= q P_{n-1}(q,p,r)+pP_{n-2}(q,p,r)+rP_{n-3}(q,p,r)$$
for $n \geq 4$.

Then for any Ferrers board $B$, 
we can define $P$-analogues $\mathbf{rPT}_{k}(B,p,q,r)$
of the rook numbers $\mathbf{rT}_{k}(B,p,q)$ and $P$-analogues 
$\mathbf{fPT}_{k}(B,p,q,r)$ of the file numbers 
 $\mathbf{fT}_{k}(B,p,q)$ exactly as before except that 
we replace Fibonacci tilings by $P$-tilings and we keep 
track of the number of tiles of size 1, 2, and 3 instead 
of keeping track of the tiles of size 1 and 2. 
Then we have the following analogues of 
Theorems \ref{thm:Ferrersfilerec}, \ref{thm:Ferrersfileprod}, 
\ref{thm:Ferrersrookrec}, and \ref{thm:Ferrersrookprod} 
with basically the same proofs.  

\begin{theorem}\label{thm:PFerrersfilerec} 
Let $B =F(b_1, \ldots, b_n)$ be a Ferrers 
board where $0 \leq b_1 \leq \cdots \leq b_n$ and $b_n > 0$. 
Let $B^- = F(b_1, \ldots, b_{n-1})$. Then for all 
$1 \leq k \leq n$, 
\begin{equation}\label{PFibtilerec}
\mathbf{fPT}_k(B,p,q,r) = \mathbf{fPT}_{k}(B^-,p,q,r)+ P_{b_n}(p,q,r) \mathbf{fPT}_{k-1}(B^-,p,q,r).
\end{equation}
\end{theorem}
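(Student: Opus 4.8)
The plan is to reproduce the proof of Theorem~\ref{thm:Ferrersfilerec} almost verbatim, replacing Fibonacci tilings by $P$-tilings throughout. Write $\mathcal{FPT}_k(B)$ for the set of $P$-file placements of $k$ tilings on $B$ (defined exactly as $\mathcal{FT}_k(B)$ but with $P$-tilings in place of Fibonacci tilings), so that $\mathbf{fPT}_k(B,p,q,r) = \sum_{P \in \mathcal{FPT}_k(B)} WP(P)$ where $WP(P) = q^{\mathrm{one}(P)} p^{\mathrm{two}(P)} r^{\mathrm{three}(P)}$. First I would classify the placements in $\mathcal{FPT}_k(B)$ according to whether column $n$ (the last column, of height $b_n$) carries a tiling. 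Because the statistics $\mathrm{one}$, $\mathrm{two}$, and $\mathrm{three}$ are additive over columns, the weight $WP$ factors as the product of the weight contributed by the last column and the weight contributed by the remaining columns; this multiplicativity is the only structural fact the argument needs, and it is immediate from the definition of $WP$.

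If there is no tiling in column $n$, then deleting that empty column gives a weight-preserving bijection between such placements and $\mathcal{FPT}_k(B^-)$, so these placements contribute $\mathbf{fPT}_k(B^-,p,q,r)$. If there is a tiling in column $n$, then deleting that column produces an element of $\mathcal{FPT}_{k-1}(B^-)$, and conversely every element of $\mathcal{FPT}_{k-1}(B^-)$ extends to such a placement by choosing any $P$-tiling of height $b_n$ for the last column. Summing over the choices of that tiling contributes a factor $\sum_{T \in \mathcal{PT}_{b_n}} WP(T) = P_{b_n}(p,q,r)$, by the very definition of $P_{b_n}(p,q,r)$ recorded just above. Hence these placements contribute $P_{b_n}(p,q,r)\,\mathbf{fPT}_{k-1}(B^-,p,q,r)$. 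Adding the two contributions yields (\ref{PFibtilerec}).

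I do not expect any genuine obstacle here: file placements involve no cancellation, so none of the bookkeeping of canceled cells needed for the rook versions in Theorems~\ref{thm:Ferrersrookrec} and~\ref{thm:Ferrersrookprod} enters. The only point requiring a word is that height-$3$ tiles behave no differently from height-$1$ and height-$2$ tiles in this argument: the classification depends solely on the presence or absence of \emph{some} tiling in the last column, and the generating function $P_{b_n}(p,q,r)$ automatically sums the weights of all admissible $P$-tilings of that height, regardless of how many tiles of each size each one contains. Thus the proof is identical in form to that of Theorem~\ref{thm:Ferrersfilerec}, with $F_{b_n}(p,q)$ replaced by $P_{b_n}(p,q,r)$.
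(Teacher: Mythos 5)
Your proposal is correct and matches the paper's intended argument: the paper proves Theorem~\ref{thm:PFerrersfilerec} by explicitly noting it follows from ``basically the same proof'' as Theorem~\ref{thm:Ferrersfilerec}, namely classifying placements by whether the last column carries a tiling and using the definition of $P_{b_n}(p,q,r)$ as the total weight of $P$-tilings of height $b_n$. Your added remarks on the multiplicativity of the weight and the irrelevance of height-$3$ tiles are accurate observations about why the substitution goes through unchanged.
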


\begin{theorem}\label{thm:PFerrersfileprod}
Let $B =F(b_1, \ldots, b_n)$ be a Ferrers 
board where $0 \leq b_1 \leq \cdots \leq b_n$. 
\begin{equation}\label{PFibfileprod}
(x+P_{b_1}(p,q,r))(x+P_{b_2}(p,q,r)) \cdots (x+P_{b_n}(p,q,r)) =
\sum_{k=0}^n \mathbf{fPT}_k(B,p,q,r) x^{n-k}.
\end{equation}
\end{theorem}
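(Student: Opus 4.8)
The plan is to follow the proof of Theorem~\ref{thm:Ferrersfileprod} essentially verbatim, the only change being that the weight contributed by a single column is now governed by $P_{b_i}(p,q,r)$ rather than $F_{b_i}(p,q)$. First I would note that both sides of (\ref{PFibfileprod}) are polynomials in $x$ of degree $n$, so it suffices to verify that they agree for all positive integers $x$. Fixing such an $x$, I would form the board $B_x$ obtained by adjoining $x$ rows of length $n$ below $B$, with a bar separating $B$ from the added rows, and define mixed $P$-file placements exactly as the mixed file placements of Section~2: in each column one either places a $P$-tiling of the appropriate height above the bar or a single rook in one of the $x$ rows below the bar. The weight of such a placement $P$ is $WP(P,p,q,r)=q^{\mathrm{one}(P)}p^{\mathrm{two}(P)}r^{\mathrm{three}(P)}$, which now records tiles of all three heights.

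Then I would evaluate $S=\sum_{P\in\mathcal{M}_n(B_x)}WP(P,p,q,r)$, where $\mathcal{M}_n(B_x)$ denotes the set of mixed $P$-file placements, in two ways. Processing the columns independently, the choices available in column $i$ contribute a factor of $x$ (one for each of the $x$ rows in which a rook may be placed) together with $P_{b_i}(p,q,r)=\sum_{T\in\mathcal{PT}_{b_i}}WP(T)$ (the total weight of the $P$-tilings of height $b_i$), so that $S=\prod_{i=1}^n (x+P_{b_i}(p,q,r))$. Alternatively, I would classify each mixed placement according to the $P$-file placement it restricts to above the bar; if this restriction uses $k$ tilings, then the $n-k$ remaining columns may each receive a rook in any of the $x$ rows, in $x^{n-k}$ ways, so these placements contribute $WP(P,p,q,r)\,x^{n-k}$. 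Summing first over the $P$-file placements with exactly $k$ tilings and then over $k$ gives $S=\sum_{k=0}^n \mathbf{fPT}_k(B,p,q,r)\,x^{n-k}$, since $\mathbf{fPT}_k(B,p,q,r)$ is by definition the total weight of these restrictions. Equating the two expressions for $S$ yields (\ref{PFibfileprod}).

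I do not anticipate any real obstacle, as this is a direct transcription of the earlier argument; as noted in the remark following Theorem~\ref{thm:Ferrersrookprod}, the counting never used that tiles had only heights $1$ and $2$. The single point deserving care is simply that the per-column tiling factor is now the full three-variable polynomial $P_{b_i}(p,q,r)$, which is precisely what makes the product side of (\ref{PFibfileprod}) appear with the correct $P$-weights, and that the weight $WP$ must be extended to mixed placements so as to track the additional tiles of height $3$.
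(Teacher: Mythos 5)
Your proposal is correct and matches the paper's intended argument exactly: the paper states this theorem without a separate proof, remarking that it follows from the same double count of mixed placements on $B_x$ used for Theorem \ref{thm:Ferrersfileprod}, with the per-column tiling factor $F_{b_i}(p,q)$ replaced by $P_{b_i}(p,q,r)$ and the weight extended to record tiles of height $3$. Your transcription, including the polynomial-identity reduction to positive integer values of $x$ and the column-by-column factorization versus classification by the restriction above the bar, is precisely that proof.
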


\begin{theorem}\label{thm:PFerrersrookrec} 
Let $B =F(b_1, \ldots, b_n)$ be a Ferrers 
board where $0 \leq b_1 \leq \cdots \leq b_n$ and $b_n > 0$. 
Let $B^- = F(b_1, \ldots, b_{n-1})$. Then for all 
$1 \leq k \leq n$, 
\begin{equation}\label{PFibrookrec}
\mathbf{rPT}_k(B,p,q,r) = \mathbf{rPT}_{k}(B^-,p,q,r)+ P_{b_{n-(k-1)}}(p,q,r) 
\mathbf{rPT}_{k-1}(B^-,p,q,r).
\end{equation}
\end{theorem}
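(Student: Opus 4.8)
The plan is to recognize that Theorem \ref{thm:PFerrersrookrec} is the exact $P$-tiling analogue of Theorem \ref{thm:Ferrersrookrec}, and that the proof of the latter used only the combinatorial structure of the cancellation scheme defining Fibonacci rook placements---not any numerical identity special to the Fibonacci numbers $F_n(p,q)$. Indeed, the entire cancellation bookkeeping described for $\mathcal{NT}_k(B)$ (the recursive guarantee that after placing tilings in columns $i_1 < \cdots < i_s$ the uncanceled empty columns have heights $b_1, \ldots, b_{n-s}$, reading left to right) refers only to the column heights $b_i$ and never to the tile-generating function. Thus I would define $\mathcal{NPT}_k(B)$ exactly as $\mathcal{NT}_k(B)$, but with Fibonacci tilings replaced by $P$-tilings (tiles of height $1$, $2$, or $3$, bottom tile of height $1$), and weight each placement $P$ by $q^{\mathrm{one}(P)} p^{\mathrm{two}(P)} r^{\mathrm{three}(P)}$, summing to $\mathbf{rPT}_k(B,p,q,r)$.

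First I would set up the classification argument verbatim from the proof of Theorem \ref{thm:Ferrersrookrec}: partition the placements $P \in \mathcal{NPT}_k(B)$ according to whether the last column ($b_n$) contains a tiling. If the last column is empty, deleting it gives a bijection onto $\mathcal{NPT}_k(B^-)$ that preserves the weight, contributing $\mathbf{rPT}_k(B^-,p,q,r)$. If the last column contains a tiling, then since the $k-1$ tilings to its left cancel the top $b_n - b_{n-(k-1)}$ cells of the last column, that tiling has height exactly $b_{n-(k-1)}$; summing the weights of all $P$-tilings of that height gives the factor $P_{b_{n-(k-1)}}(p,q,r)$, and removing the last column yields an element of $\mathcal{NPT}_{k-1}(B^-)$. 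This contributes $P_{b_{n-(k-1)}}(p,q,r)\, \mathbf{rPT}_{k-1}(B^-,p,q,r)$, and adding the two contributions gives \eqref{PFibrookrec}.

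The one point that genuinely requires checking---rather than blind copying---is that the cancellation scheme still produces a well-defined $\mathcal{NPT}_k$ once tiles of height $3$ are allowed. The scheme as written cancels cells purely as a function of the column heights $b_i$ and the order in which tilings are placed; it never inspects the internal structure of a tiling. So the recursive identity governing which cells are canceled (and hence the fact that a tiling placed last has uncanceled height $b_{n-(k-1)}$) transfers unchanged. I would state this explicitly as the reason the proof carries over, echoing the remark already made in the excerpt that ``the proofs in Sections 2 and 3 did not use any particular properties of the Fibonacci tilings.''

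The main obstacle, such as it is, is purely expository rather than mathematical: one must confirm that the height-of-last-tiling computation $b_n - b_{n-(k-1)}$ for the canceled cells is insensitive to the tile set, so that the generating function $P_{b_{n-(k-1)}}(p,q,r)$ legitimately replaces $F_{b_{n-(k-1)}}(p,q)$. Since the cancellation depends only on the $b_i$, this is immediate, and the proof is otherwise identical to that of Theorem \ref{thm:Ferrersrookrec}.
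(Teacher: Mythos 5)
Your proposal is correct and matches the paper exactly: the paper gives no separate proof for Theorem \ref{thm:PFerrersrookrec}, stating only that it follows ``with basically the same proofs'' as Theorem \ref{thm:Ferrersrookrec}, which is precisely the column-by-last-column classification you reproduce. Your explicit observation that the cancellation scheme depends only on the column heights $b_i$ and never on the internal tile structure is the right justification for the transfer, and it correctly isolates the only point needing verification.
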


\begin{theorem}\label{thm:PFerrersrookprod} 
Let $B =F(b_1, \ldots, b_n)$ be a Ferrers 
board where $0 \leq b_1 \leq \cdots \leq b_n$. 
\begin{equation}\label{PFibrookprod}
x^n =
\sum_{k=0}^n \mathbf{rPT}_{n-k}(B,p,q,r) 
(x-P_{b_1}(p,q,r))(x-P_{b_2}(p,q,r))\cdots 
(x-P_{b_{k}}(p,q,r)).
\end{equation}
\end{theorem}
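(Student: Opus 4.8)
The plan is to prove Theorem \ref{thm:PFerrersrookprod} by imitating, verbatim in structure, the proof of Theorem \ref{thm:Ferrersrookprod}, replacing Fibonacci tilings by $P$-tilings throughout and replacing the weight $F_{b_i}(p,q)$ of the tilings of a column of height $b_i$ by the weight $P_{b_i}(p,q,r)$. Since both sides of (\ref{PFibrookprod}) are polynomials in $x$ of degree $n$, it suffices to prove equality for all positive integers $x$; I will do this by forming the augmented board $AugB_x$ and counting a signed sum over mixed $P$-tiling placements in two ways.

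First I would set up the combinatorial objects. Exactly as in the construction of $AugB_x$, I start with $B_x$, reflect $B$ across its baseline to obtain $\overline{B}$ below the $x$ rows, and call the separating lines the upper and lower bars. A mixed $P$-placement on $AugB_x$ is built by processing columns from left to right: if the current column has $b_i$ uncanceled cells, then I may (i) place a $P$-tiling of height $b_i$ in $B$, which cancels top cells of the columns to its right and the corresponding bottom cells of $\overline{B}$ so as to preserve the cancellation scheme, (ii) place a rook in one of the $x$ rows between the two bars, or (iii) place a flipped $P$-tiling of height $b_i$ in $\overline{B}$. I would track $\mathrm{one}$, $\mathrm{two}$, and $\mathrm{three}$ separately in $B$ and in $\overline{B}$, and attach a minus sign to every placement according to the flipped tilings in $\overline{B}$, defining a signed weight $\overline{WP}(Q,p,q,r)$ exactly as $\overline{WF}$ was defined, now with the extra variable $r$ recording tiles of height $3$.

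Next I would evaluate $S=\sum_{Q\in\mathcal{M}_n(AugB_x)}\overline{WP}(Q,p,q,r)$ column by column. The crucial point, inherited unchanged from Theorem \ref{thm:Ferrersrookprod}, is that the cancellation scheme keeps the number of uncanceled cells in $B$ equal to the number of uncanceled cells in $\overline{B}$ in each column as it is processed. Hence in a column presenting $b_i$ uncanceled cells, the tilings in $B$ contribute $P_{b_i}(p,q,r)$, the rooks contribute $x$, and the flipped tilings in $\overline{B}$ contribute $-P_{b_i}(p,q,r)$, for a net column contribution of $P_{b_i}(p,q,r)+x-P_{b_i}(p,q,r)=x$. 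Therefore $S=x^n$. On the other hand, grouping the placements by the $P$-rook placement $P\in\mathcal{NT}_{n-k}(B)$ formed by the tilings lying in $B$, the $k$ columns without tilings each offer a rook (weight $x$) or a flipped tiling in $\overline{B}$ (weight $-P_{b_i}(p,q,r)$), so the corresponding block sums to $q^{\mathrm{one}(P)}p^{\mathrm{two}(P)}r^{\mathrm{three}(P)}\prod_{i=1}^{k}(x-P_{b_i}(p,q,r))$; summing over all such $P$ gives $\sum_{k=0}^{n}\mathbf{rPT}_{n-k}(B,p,q,r)\prod_{i=1}^{k}(x-P_{b_i}(p,q,r))$. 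Equating the two evaluations of $S$ yields (\ref{PFibrookprod}).

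I expect the only real content, as opposed to bookkeeping, to be the verification that the cancellation scheme is symmetric between $B$ and $\overline{B}$, that is, that equal numbers of cells remain uncanceled above and below the bars in each column; this is what makes the $P_{b_i}(p,q,r)$ terms cancel and collapse each column to the single factor $x$. But this step is identical to the one already carried out for Fibonacci tilings in Theorem \ref{thm:Ferrersrookprod}, and since a column of height $b_i$ has total $P$-tiling weight $P_{b_i}(p,q,r)$ in complete analogy with total Fibonacci weight $F_{b_i}(p,q)$, nothing in the argument is sensitive to whether tiles of height $3$ are allowed. The remaining steps are the routine translation of the two-way count, so the proof goes through with essentially no new ideas.
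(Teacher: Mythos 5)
Your proposal is correct and matches the paper's intent exactly: the paper proves Theorem \ref{thm:PFerrersrookprod} by declaring it has ``basically the same proof'' as Theorem \ref{thm:Ferrersrookprod}, namely the two-way signed count over mixed placements on the augmented board $AugB_x$, which is precisely what you carry out with $P$-tilings and weight $P_{b_i}(p,q,r)$ in place of $F_{b_i}(p,q)$. You also correctly identify the one point needing verification (the symmetry of the cancellation scheme between $B$ and $\overline{B}$, so each column collapses to the factor $x$), and you rightly use the multiplicative signed weight that the argument requires.
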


In particular, if we let 
$\mathbf{cp}_{n,k}(p,q,r) = \mathbf{fPT}_{n-k}(B_n,p,q,r)$,  
then we will have that \\
$\mathbf{cp}_{n,n}(p,q,r) =1$ for 
all $n \geq 0$, $\mathbf{cp}_{n,0}(p,q,r) =0$ for all $n \geq 1$, 
and 
$$\mathbf{cp}_{n+1,k}(p,q,r) =\mathbf{cp}_{n,k-1}(p,q,r) + 
P_n(p,q,r) \mathbf{cp}_{n,k}(p,q,r)$$
for $1 \leq k \leq n+1$ and 
$$x(x+P_1(p,q,r)) \cdots (x+P_{n-1}(p,q,r)) = \
\sum_{k=1}^n \mathbf{cp}_{n,k}(p,q,r)\ x^k.$$

Similarly, if we let 
$\mathbf{Sp}_{n,k}(p,q,r) = \mathbf{rPT}_{n-k}(B_n,p,q,r)$, 
then we will have that $\mathbf{Sp}_{n,n}(p,q,r) =1$ for 
all $n \geq 0$, $\mathbf{Sp}_{n,0}(p,q,r) =0$ for all $n \geq 1$, 
and 
$$\mathbf{Sp}_{n+1,k}(p,q,r) =\mathbf{Sp}_{n,k-1}(p,q,r) + 
P_k(p,q,r) \mathbf{Sp}_{n,k}(p,q,r)$$
for $1 \leq k \leq n+1$ and 
$$x^n = \sum_{k=1}^n \mathbf{Sp}_{n,k}(p,q,r)
x(x-P_1(p,q,r)) \cdots (x-P_{k-1}(p,q,r)).$$

Finally, if we let $\mathbf{sp}_{n,k}(p,q,r) = (-1)^{n-k} 
\mathbf{cp}_{n,k}(p,q,r)$, then essentially the same proof 
that we used in this section, we will give a combinatorial 
proof of the fact that the matrices $||\mathbf{Sp}_{n,k}(p,q)||$ and  
$||\mathbf{sp}_{n,k}(p,q)||$ are inverses of each other.

\section{Identities for $\mathbf{Sf}_{n,k}(p,q)$ and 
$\mathbf{cf}_{n,k}(p,q)$}

In this section, we shall derive various identities for the 
Fibonacci analogues of the Stirling numbers $\mathbf{Sf}_{n,k}(p,q)$ and 
$\mathbf{cf}_{n,k}(p,q)$. We let  $[0]_q =1$ and, for any positive integer $n$, let $[n]_q =1+q+ \cdots + q^{n-1}$. Then the usual $q$-analogues of $n!$ and $\binom{n}{k}$ are defined by 
\begin{eqnarray*}
[n]_q! &=& [n]_q[n-1]_q \cdots [2]_q [1]_q \ \mbox{and} \\
\qbinom{n}{k} &=& \frac{[n]_q!}{[k]_q![n-k]_q!}.
\end{eqnarray*}

A partition $\lambda =(\lambda_1, \ldots, \lambda_k)$ of $n$ 
is a weakly increasing sequence of positive integers such that 
$\sum_{i=1}^k \lambda_i =n$. We let $|\lambda|=n$ denote 
the size of $\lambda$ and $\ell(\lambda) =k$ denote the number 
of parts of $\lambda$. For this paper, we will draw 
the Ferrers diagram of a partition consistent 
with the convention for Ferrers boards. That is, 
the Ferrers diagram of $\lambda = (\lambda_1 \leq \cdots \leq \lambda_k)$ 
is the Ferrers board $F(\lambda_1, \ldots, \lambda_k)$. 
A standard combinatorial interpretation of the $q$-binomial coefficient 
$\qbinom{n}{k}$ is that $\qbinom{n}{k}$
equals the sum of $q^{|\lambda|}$ over all 
partitions whose Ferrers diagram are contained in the 
$(n-k) \times k$ rectangle.

We have already seen that 
$\mathbf{cf}_{n,n}(p,q) = \mathbf{Sf}_{n,n}(p,q) =1$ since 
these correspond to the empty placements in $B_n$. 
Then we have the following simple theorem.

\begin{theorem}\label{thm:id1} 

For all $n \geq 1$, 
\begin{equation*}
\mathbf{Sf}_{n,1}(p,q) =  q^{n-1} \ \mbox{and} \ 
\mathbf{cf}_{n,1}(p,q) =  \prod_{i=1}^{n-1} F_i(p,q).
\end{equation*} 

For all $n \geq 2$, 
\begin{eqnarray*}
\mathbf{Sf}_{n,2}(p,q) &=& q^{n-2}[n-1]_q \ \mbox{and}  \\ 
\mathbf{cf}_{n,n-1}(p,q) &=& \mathbf{Sf}_{n,n-1}(p,q) =
\sum_{i=1}^{n-1} F_i(p,q).
\end{eqnarray*} 
\end{theorem}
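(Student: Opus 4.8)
The plan is to prove each of the four formulas using the combinatorial interpretations established in Section 2, namely $\mathbf{Sf}_{n,k}(p,q) = \mathbf{rT}_{n-k}(B_n,p,q)$ from \eqref{Sfnkpqinterp} and $\mathbf{cf}_{n,k}(p,q) = \mathbf{fT}_{n-k}(B_n,p,q)$ from \eqref{cnkpqinterp}, where $B_n = F(0,1,\dots,n-1)$. Each identity reduces to counting a constrained set of Fibonacci rook or file placements on $B_n$ by the weight $q^{\mathrm{one}(P)}p^{\mathrm{two}(P)}$.

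For $\mathbf{Sf}_{n,1}(p,q) = \mathbf{rT}_{n-1}(B_n,p,q)$, a placement of $n-1$ tilings must occupy all $n-1$ nonzero columns of $B_n$; after the cancellation scheme is applied, each tiling sits in a column with exactly one uncanceled cell, forcing every tiling to be a single height-$1$ tile (contributing $q$). There is a unique such placement, giving $q^{n-1}$. For $\mathbf{cf}_{n,1}(p,q) = \mathbf{fT}_{n-1}(B_n,p,q)$, a file placement of $n-1$ tilings again fills all nonzero columns, but file placements do no cancellation, so column $i$ (of height $i$) carries an independent Fibonacci tiling of height $i$; summing over all choices column-by-column gives the product $\prod_{i=1}^{n-1} F_i(p,q)$. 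For $\mathbf{cf}_{n,n-1}(p,q) = \mathbf{Sf}_{n,n-1}(p,q) = \mathbf{fT}_1(B_n,p,q) = \mathbf{rT}_1(B_n,p,q)$, a single tiling occupies one column; since one tiling triggers no interaction under either scheme and a lone column of height $i$ simply carries a tiling of weight $F_i(p,q)$, the total is $\sum_{i=1}^{n-1} F_i(p,q)$, and the equality of the rook and file versions is immediate because with one tile there is nothing to cancel.

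The only formula requiring a genuine computation is $\mathbf{Sf}_{n,2}(p,q) = \mathbf{rT}_{n-2}(B_n,p,q) = q^{n-2}[n-1]_q$. Here $n-2$ tilings occupy $n-2$ of the $n-1$ nonzero columns, leaving one nonzero column empty. After cancellation, the uncanceled heights in the two empty columns (one of which has height $0$) are $0,1$; tracing the scheme shows that each of the $n-2$ placed tilings sits over a single uncanceled cell and is therefore a height-$1$ tile of weight $q$, contributing the factor $q^{n-2}$. What remains is to count which nonzero column is left empty: I expect that leaving column $j$ empty corresponds to a distinct configuration and that summing the resulting monomials yields $1 + q + \cdots + q^{n-2} = [n-1]_q$ after the $q^{n-2}$ is factored out.

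I anticipate the $\mathbf{Sf}_{n,2}$ count to be the main obstacle, because one must carefully track the cancellation scheme of Section 2 to confirm that every placed tiling is forced to height $1$ and to see precisely how the choice of the single empty nonzero column produces the geometric sum $[n-1]_q$. The cleanest route is probably to invoke Theorem \ref{thm:Ferrersrookrec}: applying the recursion \eqref{Fibrookrec} to $\mathbf{rT}_{n-2}(B_n,p,q)$ reduces the claim to the previously known value of $\mathbf{Sf}_{n,1}$ together with $\mathbf{Sf}_{n-1,2}$, and since $F_1(p,q)=F_2(p,q)=q^2$ while the relevant small Fibonacci factor contributing here is $q$, an easy induction on $n$ should verify $q^{n-2}[n-1]_q$ directly. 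The remaining three identities are then essentially bookkeeping and can be stated with only brief justification.
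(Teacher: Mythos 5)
Your arguments for $\mathbf{Sf}_{n,1}$, $\mathbf{cf}_{n,1}$, and $\mathbf{cf}_{n,n-1}=\mathbf{Sf}_{n,n-1}$ are correct and essentially identical to the paper's own proofs (modulo a harmless indexing slip: in $B_n$ column $i$ has height $i-1$, so the file tilings sit in columns $2,\dots,n$ with heights $1,\dots,n-1$). The genuine gap is in your treatment of $\mathbf{Sf}_{n,2}$. The claim that ``each of the $n-2$ placed tilings sits over a single uncanceled cell and is therefore a height-$1$ tile'' is false, and it is inconsistent with the target formula: if every configuration had weight $q^{n-2}$, then summing over the $n-1$ choices of the empty nonzero column would give $(n-1)\,q^{n-2}$, not $q^{n-2}[n-1]_q$. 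What actually happens (and is the paper's proof) is this: if column $j$ is the empty nonzero column, the tiled columns to its left each have one uncanceled cell (a single height-$1$ tile, weight $q$), but every tiled column to its right has \emph{two} uncanceled cells, and the unique Fibonacci tiling of height $2$ consists of two height-$1$ tiles, weight $q^2$. Hence the configuration with empty column $j$ has weight $q^{(j-2)+2(n-j)}=q^{2n-j-2}$, and summing over $j=2,\dots,n$ gives $q^{n-2}+q^{n-1}+\cdots+q^{2(n-2)}=q^{n-2}[n-1]_q$. The factor $[n-1]_q$ comes from this varying weight as the empty column moves, not from a bare count of positions.

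Your fallback via the rook recursion is sound in outline but, as written, would also fail because the constants are misstated: $F_1(p,q)=q$ and $F_2(p,q)=q^2$ (they are not both $q^2$), and the factor appearing in (\ref{Fibrookrec}) for $B=B_n$ and $k=n-2$ is $F_{b_{n-(k-1)}}(p,q)=F_{b_3}(p,q)=F_2(p,q)=q^2$, not $q$. With the correct factor the induction does close: $\mathbf{Sf}_{n,2}(p,q)=\mathbf{Sf}_{n-1,1}(p,q)+q^2\,\mathbf{Sf}_{n-1,2}(p,q)=q^{n-2}+q^2\cdot q^{n-3}[n-2]_q=q^{n-2}\left(1+q[n-2]_q\right)=q^{n-2}[n-1]_q$, whereas with your stated factor $q$ one would get $q^{n-2}\left(1+[n-2]_q\right)\neq q^{n-2}[n-1]_q$. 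So the result is salvageable by your recursion route, but the direct combinatorial justification you offered for the $[n-1]_q$ factor must be replaced by the correct weight analysis above.
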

\begin{proof}

For $\mathbf{Sf}_{n,1}(p,q)$, we must count  
the weights of the Fibonacci 
rook tilings of $B_n$ in which every column has a tiling. 
For each column $i \geq 2$ in $B_n$, our cancellation scheme 
ensures that all but one square in column $i$ is canceled by 
the tilings to its left.  Thus there is only one Fibonacci 
rook tiling that contributes to $\mathbf{Sf}_{n,1}(p,q)$ which 
is the tiling where every column has exactly one tile of height 
1. For example, Figure \ref{fig:special5} picture such a tiling 
for  $\mathbf{Sf}_{5,1}(p,q)$. Hence 
$\mathbf{Sf}_{n,1}(p,q) =q^{n-1}$. 

\fig{special5}{The tiling for $\mathbf{Sf}_{5,1}(p,q)$.}

For $\mathbf{cf}_{n,1}(p,q)$, we must count the weights of the Fibonacci file tilings of $B_n$ in which every column has a tiling. 
Since the sum of the weights of the tilings in column 
$i$ is $F_{i-1}(p,q)$ for $i =2, \ldots, n$, it follows 
that $\mathbf{cf}_{n,1}(p,q)= \prod_{i=1}^{n-1} F_i(p,q)$.

For $\mathbf{Sf}_{n,2}(p,q)$, we know that $\mathbf{Sf}_{2,2}(p,q) =1$
so that our formula holds for $n =2$.   For $n \geq 3$, we 
must count the weights of all the Fibonacci rook tilings such that 
there is exactly one empty column. It is easy to see 
that if the empty column is at the end, then by our argument 
of $\mathbf{Sf}_{n,1}(p,q)$, there is exactly one tile in each 
columns $2, \ldots, n-1$ so that the weight of such a tiling 
is $q^{n-2}$.  Then as the empty column moves right to left, 
we see that we replace a column with one tile with a column 
with two tiles. This process is pictured in Figure \ref{fig:Sfn2} 
for $n =6$. It follows that for $n \geq 3$, 
$$\mathbf{Sf}_{n,2}(p,q) = q^{n-2}+q^{n-1} \cdots + q^{2(n-2)} 
=q^{n-2}(1+q+ \cdots + q^{n-2}) = q^{n-2}[n-1]_q.$$

\fig{Sfn2}{The tilings for $\mathbf{Sf}_{6,2}(p,q)$.}

For $\mathbf{Sf}_{n,n-1}(p,q)$ and $\mathbf{cf}_{n,n-1}(p,q)$,  
we must count the tilings of $B_n$ in which exactly one 
column is tiled. In this case, the  rook tilings and the file tilings are 
the same. Hence $\mathbf{cf}_{n,n-1}(p,q) = \mathbf{Sf}_{n,n-1}(p,q) =
\sum_{i=1}^{n-1} F_i(p,q)$.
\end{proof}

Next we define  
$$\mathbb{SF}_k(p,q,t) := \sum_{n \geq k} \mathbf{Sf}_{n,k}(p,q) t^n$$ 
for $k \geq 1$
It follows from Theorem \ref{thm:id1} that 
\begin{equation}\label{eq:id2}
\mathbb{SF}_1(p,q,t) = 
\sum_{n \geq 1} q^{n-1}t^n = \frac{t}{(1-qt)}=  \frac{t}{(1-F_1(p,q)t)}.
\end{equation}
Then for $k > 1$,  
\begin{eqnarray*} 
\mathbb{SF}_k(p,q,t) &=& \sum_{n \geq k} \mathbf{Sf}_{n,k}(p,q) t^n \\
&=& t^k + \sum_{n > k} \mathbf{Sf}_{n,k}(p,q) t^n \\
&=& t^k + t \sum_{n > k} \left( \mathbf{Sf}_{n-1,k-1}(p,q) +
F_k(p,q) \mathbf{Sf}_{n-1,k-1}(p,q)\right) t^{n-1} \\
&=& t^k + t\left(\sum_{n > k}  \mathbf{Sf}_{n-1,k-1}(p,q) t^{n-1}\right) 
+ F_k(p,q)t\left(\sum_{n > k}  \mathbf{Sf}_{n-1,k}(p,q) t^{n-1}\right) \\
&=& t^k + t(\mathbb{SF}_{k-1}(p,q,t) -t^{k-1}) +  F_k(p,q)t \mathbb{SF}_k(p,q,t).
\end{eqnarray*}
It follows 
that 
\begin{equation}\label{eq:id3}
\mathbb{SF}_k(p,q,t) = \frac{t}{(1 - F_k(p,q)t)}\mathbb{SF}_{k-1}(p,q,t).
\end{equation}
The following theorem easily follows from (\ref{eq:id2}) and 
(\ref{eq:id3}).

\begin{theorem}\label{thm:id4} For all $k \geq 1$, 
$$\mathbb{SF}_k(p,q,t)= \frac{t^k}{(1-F_1(p,q)t) (1-F_2(p,q)t)\cdots 
(1-F_k(p,q)t)}.$$
\end{theorem}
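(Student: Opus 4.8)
The plan is to prove Theorem \ref{thm:id4} by induction on $k$, using the base case (\ref{eq:id2}) and the recursion (\ref{eq:id3}) as the inductive step. Both of these are already established in the excerpt, so the argument is essentially a telescoping of the recursion, and the main content is bookkeeping rather than any new combinatorial insight.

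First I would observe that (\ref{eq:id2}) gives the base case $k=1$ directly: since $F_1(p,q)=q$, we have
\begin{equation*}
\mathbb{SF}_1(p,q,t) = \frac{t}{1-F_1(p,q)t} = \frac{t^1}{1-F_1(p,q)t},
\end{equation*}
which is exactly the claimed formula with $k=1$. For the inductive step, I would assume the formula holds for $k-1$, namely
\begin{equation*}
\mathbb{SF}_{k-1}(p,q,t) = \frac{t^{k-1}}{(1-F_1(p,q)t)(1-F_2(p,q)t)\cdots(1-F_{k-1}(p,q)t)},
\end{equation*}
and then substitute this into the recursion (\ref{eq:id3}). Multiplying by the factor $\frac{t}{1-F_k(p,q)t}$ replaces $t^{k-1}$ by $t^k$ in the numerator and appends the factor $(1-F_k(p,q)t)$ to the denominator, yielding precisely the asserted expression for $\mathbb{SF}_k(p,q,t)$. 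This completes the induction.

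Since the heavy lifting — deriving (\ref{eq:id2}) from Theorem \ref{thm:id1} and deriving (\ref{eq:id3}) from the recursion (\ref{pqSfrec}) — has already been carried out in the excerpt, there is no real obstacle here; the only thing to watch is that the telescoping is presented cleanly so that the reader sees each application of (\ref{eq:id3}) contributes exactly one new factor $(1-F_j(p,q)t)^{-1}$ to the product. Alternatively, one could bypass the explicit induction and simply write $\mathbb{SF}_k = \left(\prod_{j=2}^{k}\frac{t}{1-F_j(p,q)t}\right)\mathbb{SF}_1$ by iterating (\ref{eq:id3}) down to $k=1$, then substitute (\ref{eq:id2}); this collects $k-1$ factors of $t$ together with the single $t$ from $\mathbb{SF}_1$ to give $t^k$ and assembles all denominator factors $1-F_j(p,q)t$ for $j=1,\dots,k$. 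Either presentation is routine, so I would choose whichever reads most transparently.
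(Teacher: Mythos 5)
Your proposal is correct and matches the paper exactly: the paper states that Theorem \ref{thm:id4} ``easily follows'' from the base case (\ref{eq:id2}) and the recursion (\ref{eq:id3}), which is precisely the induction (or equivalently the telescoping iteration) you spell out. You have simply made explicit the routine bookkeeping the paper leaves to the reader.
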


For any formal power series in $f(x) = \sum_{n \geq 0}f_n x^n$, 
we let $f(x)|_{x^n} = f_n$ denote the coefficient of $x^n$ in 
$f(x)$. 

Our next result will give formulas for 
$\mathbf{Sf}_{n,k}(p,q)|_{p^0}$ and $\mathbf{Sf}_{n,k}(p,q)|_{p}$. 
Note that we have already shown that 
$\mathbf{Sf}_{n,2}(p,q) =q^{n-2}[n-1]_q$ so that $\mathbf{Sf}_{n,2}(p,q)|_{p} =0$. 

\begin{theorem}\label{thm:id5} 

For all $n \geq k \geq 1$, 
\begin{equation}\label{eq:id5}
\mathbf{Sf}_{n,k}(p,q)|_{p^0} =q^{n-k}\qbinom{n-1}{k-1} 
\end{equation}
and for all $n > k \geq 3$, 
\begin{equation}\label{eq:id6}
\mathbf{Sf}_{n,k}(p,q)|_{p} =
q^{n-k} \sum_{s=1}^{k-2} sq^{s-1}\sum_{i=0}^{n-k-1} q^{i(s+1)}
\qbinom{i+k-s-2}{i} \qbinom{s+ n-k-i}{s+1}.
\end{equation}
\end{theorem}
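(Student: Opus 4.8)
The plan is to work entirely within the Fibonacci-rook model of Section~2, using the interpretation $\mathbf{Sf}_{n,k}(p,q)=\mathbf{rT}_{n-k}(B_n,p,q)$ from (\ref{Sfnkpqinterp}). The first step is to record the explicit shape of a placement in $\mathcal{NT}_{n-k}(B_n)$. Because $B_n=F(0,1,\dots,n-1)$ is the staircase in which every tiling cancels exactly the top cell of each column to its right, a placement with tilings in columns $i_1<\cdots<i_{n-k}$ forces the $s$-th tiling to have height $i_s-s$. Writing $a_s=i_s-s$, I would show that $(i_1,\dots,i_{n-k})\mapsto(a_1,\dots,a_{n-k})$ is a bijection onto weakly increasing sequences $1\le a_1\le\cdots\le a_{n-k}\le k$, and hence that
$$\mathbf{Sf}_{n,k}(p,q)=\sum_{1\le a_1\le\cdots\le a_{n-k}\le k}\ \prod_{s=1}^{n-k}F_{a_s}(p,q).$$
(This is just the coefficient extraction $[t^n]$ from Theorem~\ref{thm:id4}, now read combinatorially.) Everything then reduces to extracting the $p^0$- and $p^1$-coefficients of a product of Fibonacci tiling polynomials, for which I would first establish the elementary expansion $F_h(p,q)=q^{h}+(h-2)q^{h-2}\,p+O(p^2)$: the $p^0$-term counts the unique all-$1$ tiling, and the $p^1$-term counts tilings with a single height-$2$ tile, whose bottom tile must be a $1$, so the height-$2$ tile sits in one of positions $2,\dots,h-1$.

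For (\ref{eq:id5}) the computation is immediate: setting $p=0$ gives $\prod_s F_{a_s}(0,q)=q^{a_1+\cdots+a_{n-k}}$, so $\mathbf{Sf}_{n,k}(p,q)|_{p^0}=\sum q^{a_1+\cdots+a_{n-k}}$ over the sequences above. Substituting $a_s=1+b_s$ turns this into $q^{n-k}$ times the generating function of partitions inside the $(n-k)\times(k-1)$ box, which is $q^{n-k}\qbinom{n-1}{k-1}$ by the standard $q$-binomial interpretation. This handles $p^0$.

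For (\ref{eq:id6}) the key step is the product rule applied to the expansion of $F_h$, namely $\prod_s F_{a_s}(p,q)|_{p^1}=\sum_{t}(a_t-2)q^{a_t-2}\prod_{s\ne t}q^{a_s}$, where $t$ runs over the slots with $a_t\ge 2$. I would then reorganize this sum by recording, for each term, the height $h=a_t$ of the distinguished (special) slot and the number $i$ of slots lying to its right. Since the sequence is weakly increasing, the slots left of $t$ carry weakly increasing heights in $\{1,\dots,h\}$ and the $i$ slots right of $t$ carry weakly increasing heights in $\{h,\dots,k\}$, so the decomposition "left block / special slot / right block" is a clean bijection. Writing $s=h-2$ (so $1\le s\le k-2$) and letting $i$ be the right-block size (so $0\le i\le n-k-1$), the right block contributes $q^{ih}\qbinom{i+k-s-2}{i}$ (partitions in an $i\times(k-s-2)$ box after subtracting $h$ from each height), the left block contributes $q^{\,n-k-i-1}\qbinom{s+n-k-i}{s+1}$ (partitions in a $(n-k-i-1)\times(s+1)$ box after subtracting $1$ from each height), and the special slot contributes the count $s$ times $q^{s}$. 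Collecting powers of $q$ gives exactly $q^{n-k}\,s\,q^{s-1}q^{i(s+1)}$, which is the summand of (\ref{eq:id6}).

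The routine parts are the two $q$-binomial box-counts and the bookkeeping of exponents; the one step that needs care is the reorganization in the last paragraph. The subtlety is that when several slots share the height $h$, the special marker can sit on any of them, so the left/right split must be defined by slot \emph{position} (left $=$ slots before $t$, right $=$ slots after $t$) rather than by strict height comparison; defining it this way makes the blocks' height ranges $\{1,\dots,h\}$ and $\{h,\dots,k\}$ and keeps the correspondence bijective, avoiding any over- or under-counting of tied heights. Once that is set up correctly, matching the ranges $1\le s\le k-2$ and $0\le i\le n-k-1$ and verifying the exponent identity $n-k-1+s+i(s+1)=(n-k)+(s-1)+i(s+1)$ completes the proof; the cases $k\le 2$ or $n=k$ give empty sums on the right, consistent with $\mathbf{Sf}_{n,k}(p,q)$ having no $p^1$-term there.
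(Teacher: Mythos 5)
Your proposal is correct and is essentially the paper's own argument in lightly repackaged form: your marked-slot decomposition of the weakly increasing height sequence $1 \le a_1 \le \cdots \le a_{n-k} \le k$ (with $s = a_t - 2$ and $i$ slots to the right) is exactly the paper's classification by the column containing the height-$2$ tile, the $s$ height-$1$ tiles in that column, and the $i$ tiled columns to its right, with the same partitions in the $(n-k-i-1)\times(s+1)$ and $i\times(k-s-2)$ boxes producing the same $q$-binomial factors. Likewise your $p^0$ computation matches the paper's generating-function proof of (\ref{eq:id5}); your explicit bijection $a_s = i_s - s$ and the position-based (rather than height-based) treatment of tied heights just make rigorous the reconstruction step the paper only sketches.
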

\begin{proof} 

There are two proofs that we can give for (\ref{eq:id5}). 
The first uses the generating function
\begin{equation}\label{eq:id7}
\mathbb{SF}_k(p,q,t) = \sum_{n \geq k} \mathbf{Sf}_{n,k}(p,q) t^n 
=  \frac{t^k}{(1-F_1(p,q)t) (1-F_2(p,q)t)\cdots 
(1-F_k(p,q)t)}.
\end{equation}
Clearly, for all $n \geq 1$, $F_n(p,q)|_{p^0} =q^n$ because 
the only Fibonacci tiling $T \in \mathcal{FT}_n$ which has 
no tiles of height 2 is the tiling which consists of 
$n$ tiles of height 1. 
Taking the coefficient of $p^0$ on both sides of (\ref{eq:id7}), we see 
that 
\begin{eqnarray}\label{eq:id8}
\sum_{n \geq k} \mathbf{Sf}_{n,k}(p,q)|_{p^0}t^n 
&=&   \frac{t^k}{(1-F_1(p,q)|_{p^0}t) (1-F_2(p,q)|_{p^0}t)\cdots 
(1-F_k(p,q)|_{p^0}t)} \nonumber \\
&=& \frac{t^k}{(1-qt)(1-q^2t) \cdots (1-q^kt)}.
\end{eqnarray}

Taking the coefficient of 
$t^n$ on both sides of (\ref{eq:id8}), we see 
that $\mathbf{Sf}_{n,k}(p,q)|_{p^0}$ equals 
the sum of $q^{|\lambda|}$ over all partitions $\lambda$ with 
$n-k$ parts whose parts are from $\{1, \ldots, k\}$. If 
we subtract 1 from each part of $\lambda$, we will end up 
with a partition contained in the $(n-k) \times (k-1)$. Since 
the sum of $q^{|\pi|}$ over all partitions $\pi$ whose 
Ferrers diagram is contained in $(n-k) \times (k-1)$ rectangle 
is $\qbinom{n-1}{k-1}$, it follows that 
$\mathbf{Sf}_{n,k}(p,q)|_{p^0} = q^{n-k}\qbinom{n-1}{k-1}$.

In fact, this result can be seen directly from our rook 
theory interpretation for $\mathbf{Sf}_{n,k}(p,q)$.  That 
is, $\mathbf{Sf}_{n,k}(p,q)|_{p^0}$ is the sum of $q^{\mathrm{one}(T)}$ over 
all Fibonacci rook tilings $T$ of $B_n$ with $k$ empty columns 
that only use tiles of height $1$.  It is easy to see that 
that every time we traverse an empty column, the number 
of tiles that we can put in a column goes up by one. Since the 
first column is empty, this means that we can start with 
tilings of height 1 and as we traverse the $k-1$ remaining 
empty column, the maximum number of tiles of height one 
that we can put in any column is $k$. It follows 
that if we remove the tiles of height $1$ at the bottom 
of any Fibonacci rook tiling $T$ of $B_n$ with $k$ empty columns 
that use only tiles of height 1, we will be left with a Ferrers 
diagram of a partition which is contained in the $(n-k) \times (k-1)$ 
rectangle.  This process is pictured in Figure \ref{fig:Sfnkp0} 
in the case where $n =11$ and $k =4$. 

\fig{Sfnkp0}{The correspondence between tilings in 
$\mathcal{NT}_{n-k}(B_n)$ using only  
tiles of height 1 and partitions contained in the 
$(n-k) \times (k-1)$ rectangle.}

We can also reverse this correspondence.  That is, 
if we are given the Ferrers diagram of partition $\mu$ contained 
the $(n-k) \times (k-1)$ rectangle, we can reconstruct 
the tiling $P \in \mathcal{NT}_{n_k}(B_n)$ which gave rise 
to $\mu$.  That is, we first add tiles of height 1 at the bottom of $\mu$ which will give us 
the Ferrers diagram of partition $\lambda = (\lambda_1, 
\ldots, \lambda_{n-k})$ with $n-k$ parts 
with parts from $\{1, \ldots, k\}$. Thus 
$1 \leq \lambda_1 \leq \cdots \leq \lambda_{n-k} \leq k$. 
Then if $\lambda_1=1$, we put a tiling of height $1$ in 
column 2. If $\lambda_1 =j >1$, then we start 
with $j$ empty columns and place a tiling of height 
$j$ in column $j+1$.  Then assuming that we have 
placed the tilings corresponding to $\lambda_1, \ldots, 
\lambda_{i}$, we place the tiling for $\lambda_{i+1}$ 
next to the tiling for $\lambda_i$ if $\lambda_i = \lambda_{i+1}$ 
and we put $\lambda_{i+1} - \lambda_i$ consecutive empty columns next 
to the column that contains the tiling for $\lambda_i$ and 
place the tiling for $\lambda_{i+1}$ in the next column if 
$\lambda_{i+1} - \lambda_i > 0$. This process 
is pictured in Figure \ref{fig:Sfnkp02} in the case 
where $n =11$ and $k =5$.  That is, we start with 
the partition $(0,0,0,2,3,3)$ contained in the $6 \times 4$ rectangle. 
Then we add one to each part of the partition to get the 
partition $\lambda = (1,1,1,3,4,4)$.  Then our process says we 
start putting a tiling of height 1 in column 2 and add two 
more tiling of height 1 columns 3 and 4. Next, since 
$\lambda_4 - \lambda_3 =2$, we put two empty columns followed 
by a column of height $3$ in column 7.  Next, since 
$\lambda_5 - \lambda_4 =1$, we put an empty column followed 
by a column of height $4$ in column 9.  Finally 
since $\lambda_5 = \lambda_6$, we add another column of 
height $4$ in column 10.

\fig{Sfnkp02}{Reconstructing of an element 
$P \in \mathcal{NT}_{n-k}(B_n)$ from a partitions contained in the  
$(n-k) \times (k-1)$ rectangle.}

Next we consider (\ref{eq:id6}). In this case, the Fibonacci 
rook tilings 
$P \in \mathcal{NT}_{n-k}(B_n)$ that we must consider 
are those tilings which have exactly one tile of height 2 and 
the rest of tiles must be of height 1. 
We let $c$ denote the column which contains 
the tile of height $2$.  We shall classify 
such tilings by the number $s$ of tiles of height 
1 that are in column $c$. 
Since the maximum number of non-canceled cells in any column 
is $k$ and every column which is tiled has a tile of height 
1 at the bottom of the column, $s$ can vary from 1 to $k-2$.
We shall think of the factor $q^{n-k}$ that sits outside 
of the first sum in  (\ref{eq:id6}) as the contribution 
from the tiles of height $1$ at the bottom of the $n-k$ columns 
that have tilings. The factor $sq^{s-1}$ accounts for the 
factor that comes from the column $c$. That is,  
there are $s-1$ tiles of height 1 in $c$ other than the tile of height 
1 at the bottom of column $c$  and the number of tiles of height 
1 that can lie below the tile of height of $2$ in $c$ can vary from 
1 to $s$.  

We interpret the $i$ in the inner sum as the number of columns to 
the right of $c$ which have tiles. There 
are $(s+1)i$ tiles of height 1 in rows $2$ through $s+2$ in each of 
these $i$ columns which account for the factor $q^{i(s+1)}$. 
Next we consider the set partition $\beta$ induced by 
that tiles above row $s+2$ that lie in these $i$ columns. 
$\beta$ must be contained in the $i \times (k-s-2)$ since 
the maximum number of uncanceled cells in any column is $k$. 
As $\beta$ varies over all possible partitions contained in 
$i \times (k-s-2)$, we get a factor of $\qbinom{i+k-s-2}{i}$. 
Finally we let $\alpha$ be the partition induced by the 
tilings in the columns to the left of column $c$ minus the 
tiles of height 1 at the bottom of these columns.  There 
are $n-k-i-1$ such columns and the maximum number of 
tiles in any such column is $s+1$.  As 
$\alpha$ varies over all possible partitions contained in 
$(n-k-i-1) \times (s+1)$, we get a factor of $\qbinom{s+n-k-i}{s+1}$.

The decomposition of a $P \in \mathcal{NT}_{n-k}(B_n)$ where 
$n =17$, $s=2$, $k=7$, and $i=3$ into the partitions $\alpha$, $\beta$, and 
the $i\times (s+1)$ rectangle is pictured in Figure \ref{fig:Sfnkp1}.

\fig{Sfnkp1}{The decomposition of $P \in \mathcal{NT}_{n-k}(B_n)$.}

We can use the same argument that we did in the 
proof of (\ref{eq:id5}) to prove that we can reconstruct 
a $P \in \mathcal{NT}_{n-k}(B_n)$ from $s$, $i$, and the partitions 
$\alpha$ and $\beta$.  

Thus we have proved that 
$$ \mathbf{Sf}_{n,k}(p,q)|_{p^1} =
q^{n-k} \sum_{s=1}^{k-2} sq^{s-1}\sum_{i=0}^{n-k-1} q^{i(s+1)}
\qbinom{i+k-s-2}{i} \qbinom{s+ n-k-i}{s+1}.$$ 

\end{proof}

We claim that (\ref{eq:id6}) is just a $q$-analogue of 
$\binom{k-1}{2}\binom{n-1}{k}$.  That is, we have 
the following theorem. 

\begin{theorem} For all $n \geq k \geq 3$, 
\begin{equation}\label{eq:Y}
 \mathbf{Sf}_{n,k}(p,1)|_{p^1} = \binom{k-1}{2}\binom{n-1}{k}
\end{equation}
\end{theorem}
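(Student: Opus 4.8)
The plan is to set $q=1$ in the formula (\ref{eq:id6}) of Theorem \ref{thm:id5} and show that the resulting double sum collapses to $\binom{k-1}{2}\binom{n-1}{k}$. First I would note that at $q=1$ every $q$-binomial coefficient $\qbinom{a}{b}$ specializes to the ordinary binomial coefficient $\binom{a}{b}$, the factor $q^{n-k}$ becomes $1$, the factor $sq^{s-1}$ becomes $s$, and the factor $q^{i(s+1)}$ becomes $1$. Thus the statement to prove reduces to the purely combinatorial identity
\begin{equation*}
\binom{k-1}{2}\binom{n-1}{k} = \sum_{s=1}^{k-2} s \sum_{i=0}^{n-k-1}
\binom{i+k-s-2}{i}\binom{s+n-k-i}{s+1}.
\end{equation*}

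My main tool would be a Vandermonde-type evaluation of the inner sum over $i$. The summand $\binom{i+k-s-2}{i}\binom{s+n-k-i}{s+1}$ has the shape $\binom{i+A}{i}\binom{B-i}{C}$ with $A=k-s-2$, and writing $\binom{i+k-s-2}{i}=\binom{i+k-s-2}{k-s-2}$ puts both factors in the ``upper-index-increasing'' form to which the Vandermonde convolution $\sum_i \binom{i+A}{A}\binom{M-i}{N}=\binom{M+A+1}{A+N+1}$ applies. Carrying this out, I expect the inner sum to evaluate in closed form to a single binomial coefficient depending on $s$; a short check of the index ranges (that the terms outside $0\le i\le n-k-1$ vanish because the relevant binomials are zero) lets me extend the summation to the full range needed for the convolution identity. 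After this step the problem becomes a single sum over $s$ of the form $\sum_{s=1}^{k-2} s\,\binom{\,n-1-s\,}{\,?\,}$ or similar.

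The remaining step is then to evaluate this single weighted sum over $s$. Here I would use the absorption identity $s\binom{m}{s}= m\binom{m-1}{s-1}$ (or re-index $s\mapsto s-1$) to turn the weight $s$ into a shift of the binomial argument, and then apply the hockey-stick identity $\sum_j \binom{j}{r}=\binom{\text{top}+1}{r+1}$ to collapse the sum. The target factorizes as $\binom{k-1}{2}\binom{n-1}{k}$, so I would aim to recognize the product $\binom{k-1}{2}=\tfrac{(k-1)(k-2)}{2}$ as emerging from the $s$-summation while the $\binom{n-1}{k}$ factor emerges from the earlier Vandermonde collapse; matching these explicitly confirms the identity.

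The hard part will be bookkeeping the index shifts so that the Vandermonde and hockey-stick applications use exactly the right parameters, and in particular verifying that the boundary terms I add or drop when extending/truncating the summation ranges genuinely vanish. An alternative, and perhaps cleaner, route would be a direct combinatorial bijection: interpret $\binom{k-1}{2}\binom{n-1}{k}$ as the count of the $q=1$ rook tilings in $\mathcal{NT}_{n-k}(B_n)$ with exactly one tile of height $2$, where $\binom{n-1}{k}$ chooses the shape of the underlying height-$1$ configuration (as in the bijection proving (\ref{eq:id5})) and $\binom{k-1}{2}$ counts the placement of the single height-$2$ tile among the available cells of the tiled columns. If the generating-function manipulation becomes unwieldy, I would fall back on this bijective interpretation, since Theorem \ref{thm:id5} already furnishes the explicit correspondence between tilings and partitions in the rectangle that I would extend to track the lone height-$2$ tile.
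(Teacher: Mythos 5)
Your proposal is correct, but it proves the identity by a genuinely different route than the paper. The paper proceeds by double induction (on $k$, then $n$) directly from the recursion $\mathbf{Sf}_{n+1,k}(p,q)=\mathbf{Sf}_{n,k-1}(p,q)+F_k(p,q)\mathbf{Sf}_{n,k}(p,q)$: extracting the coefficient of $p^1$ at $q=1$ and using $F_k(p,1)|_{p^1}=k-2$, $F_k(p,1)|_{p^0}=1$, together with the already-established $\mathbf{Sf}_{n,k}(p,1)|_{p^0}=\binom{n-1}{k-1}$, gives
$\mathbf{Sf}_{n+1,k}(p,1)|_{p^1}=\mathbf{Sf}_{n,k-1}(p,1)|_{p^1}+(k-2)\binom{n-1}{k-1}+\mathbf{Sf}_{n,k}(p,1)|_{p^1}$,
and Pascal-type manipulations close the induction; the formula (\ref{eq:id6}) is never touched. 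Your route instead specializes (\ref{eq:id6}) at $q=1$ and sums directly, and it works out even more cleanly than you anticipated: writing $\binom{i+k-s-2}{i}=\binom{i+A}{A}$ with $A=k-s-2$ and $\binom{s+n-k-i}{s+1}=\binom{B+(M-i)}{B}$ with $B=s+1$, $M=n-k-1$, the summation range $0\le i\le n-k-1$ is exactly the full Vandermonde range (no boundary terms to check), and
\begin{equation*}
\sum_{i=0}^{M}\binom{i+A}{A}\binom{B+M-i}{B}=\binom{A+B+M+1}{M}=\binom{n-1}{n-k-1}=\binom{n-1}{k},
\end{equation*}
which is \emph{independent of $s$}; hence no absorption or hockey-stick step is needed, since the outer sum is simply $\binom{n-1}{k}\sum_{s=1}^{k-2}s=\binom{n-1}{k}\binom{k-1}{2}$. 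What each approach buys: yours is a one-step closed-form evaluation that explains the factorization structurally (the $i$-sum produces $\binom{n-1}{k}$, the $s$-sum produces $\binom{k-1}{2}$), but it rests on the heavier Theorem \ref{thm:id5}; the paper's induction is more elementary, needing only the recursion and the $p^0$ coefficient (\ref{eq:id5}). One caution on your fallback bijective sketch: the number of ways to place the lone height-$2$ tile depends on the column heights of the underlying configuration, so the count does not a priori split as (choose shape)$\times\binom{k-1}{2}$; the product only emerges after summation, so that alternative would need real work, whereas your primary generating-function argument is complete as is.
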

\begin{proof}
We can easily prove (\ref{eq:Y}) by induction on $k$ and then 
by induction on $n$. First observe 
$\mathbf{Sf}_{k,k}(p,1)|_{p^1} =0$ since $\mathbf{Sf}_{k,k}(p,q) =1$. 
Thus for each $k$, the base case of the induction on $n$ holds. 

Next observe that for $k \geq 3$, $F_k(p,1)|_{p^1} =k-2$ and 
 $F_k(p,1)|_{p^0} =1$.  Hence 
\begin{eqnarray}\label{eq:YY}
\mathbf{Sf}_{n+1,k}(p,1)|_{p^1} &=& 
\mathbf{Sf}_{n,k-1}(p,1)|_{p^1} + 
\left(F_k(p,1)\mathbf{Sf}_{n,k}(p,1)\right)|_{p^1} \nonumber \\
&=& 
\mathbf{Sf}_{n,k-1}(p,1)|_{p^1} + 
F_k(p,1)|_{p^1}\mathbf{Sf}_{n,k}(p,1)|_{p^0} + 
F_k(p,1)|_{p^0}\mathbf{Sf}_{n,k}(p,1)|_{p^1} \nonumber \\
&=& \mathbf{Sf}_{n,k-1}(p,1)|_{p^1} + 
(k-2)\binom{n-1}{k-1} + \mathbf{Sf}_{n,k}(p,1)|_{p^1}.
\end{eqnarray}

Now suppose that $k=3$. Then we need to show 
that $\mathbf{Sf}_{n,3} = \binom{n-1}{3}$.
Note that for all 
$n \geq 2$, $\mathbf{Sf}_{n,2}(p,q) =q^{n-2}[n-1]_q$ so 
that $\mathbf{Sf}_{n,2}(p,1)|_{p^1} =0$. Thus using 
(\ref{eq:YY}) and induction, we see that 
\begin{equation*}
\mathbf{Sf}_{n+1,3}(p,1)|_{p^1} = 
\binom{n-1}{2}+\binom{n-1}{3} = \binom{n}{3}.
\end{equation*}
This establish (\ref{eq:Y}) in the case $k=3$. 

For $k > 3$, assume by induction 
that  $\mathbf{Sf}_{n,k-1} = \binom{k-1}{2}\binom{n-1}{k-1}$.
Then using 
(\ref{eq:YY}) and induction, we see that 
\begin{eqnarray*}
\mathbf{Sf}_{n+1,3}(p,1)|_{p^1} &=& \binom{k-1}{2}\binom{n-1}{k-1}
+(k-2)\binom{n-1}{k-1} +\binom{k-1}{2}\binom{n-1}{k} \\
&& \binom{k-1}{2}\binom{n-1}{k-1} +\binom{k-1}{2}\binom{n-1}{k} \\
&=& \binom{k-1}{2}\binom{n}{k}.
\end{eqnarray*}
\end{proof}

A sequence of real numbers $a_0, \ldots, a_n$ is is said to be {\em unimodal} 
if there is a $0 \leq j \leq n$ such that 
$a_0 \leq  \cdots \leq a_j \geq a_{j+1} \geq \cdots \geq a_n$ and 
is said to be {\em log concave} if for $0 \leq i \leq n$, 
$a_i^2- a_{i-1}a_{i+1} \geq 0$ where we set $a_{-1} = a_{n+1} =0$.
If a sequence is log concave, then 
it is unimodal. A polynomial $P(x) = \sum_{k=0}^n a_k x^k$ is said to be {\em unimodal} if $a_0, \ldots, a_n$ is a unimodal sequence and is said 
to be log concave if $a_0, \ldots, a_n$ is log concave. 
Computational evidence suggests that 
the polynomials $\mathbf{Sf}_{n,k}(p,1)$ are log concave for 
all $n \geq k$.  We can prove this for $k \leq 4$.  Clearly, this is true for 
$k=1$ and $k=2$ because by Theorem \ref{thm:id1} both 
$S_{n,1}(p,1)$ and $S_{n,2}(p,1)$ are just constants. For 
$k=3$ and $k=4$, we have the following theorem. 
\begin{theorem}
For all $n \geq 3$ and $s \geq 0$, 
\begin{equation}\label{Sfn3q=1}
\mathbf{Sf}_{n,3}(p,1)|_{p^s} = \binom{n-1}{s+2}.
\end{equation}
and for all $n \geq 4$ and $s \geq 0$, 
\begin{equation}\label{Sfn4q=1}
\mathbf{Sf}_{n,4}(p,1)|_{p^s} = (2^{s+1}-1)\binom{n-1}{s+3}.
\end{equation}
\end{theorem}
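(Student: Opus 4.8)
The plan is to prove both identities by induction on $n$, extracting the coefficient of $p^s$ from the defining recursion (\ref{pqSfrec}) specialized at $q=1$. Setting $q=1$ there gives $\mathbf{Sf}_{n+1,k}(p,1) = \mathbf{Sf}_{n,k-1}(p,1) + F_k(p,1)\,\mathbf{Sf}_{n,k}(p,1)$. From the tiling description of $F_k(p,q)$ one computes directly that $F_3(p,1)=1+p$ and $F_4(p,1)=1+2p$; these are the only values of $F_k(p,1)$ that will be needed. I will also use the closed forms from Theorem \ref{thm:id1}, namely $\mathbf{Sf}_{n,2}(p,1)=n-1$ (since $\mathbf{Sf}_{n,2}(p,q)=q^{n-2}[n-1]_q$), together with, for the second identity, the first identity proved here.

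For (\ref{Sfn3q=1}), write $a_{n,s}=\mathbf{Sf}_{n,3}(p,1)|_{p^s}$. Since $(1+p)\mathbf{Sf}_{n,3}(p,1)$ contributes $a_{n,s}+a_{n,s-1}$ to the coefficient of $p^s$, the recursion yields $a_{n+1,s} = (n-1)\chi(s=0) + a_{n,s} + a_{n,s-1}$. I would then verify that $a_{n,s}=\binom{n-1}{s+2}$ solves this: for $s=0$ the recursion reads $a_{n+1,0}=(n-1)+a_{n,0}$, which matches $\binom{n}{2}=(n-1)+\binom{n-1}{2}$, while for $s\geq 1$ it reduces to Pascal's rule $\binom{n}{s+2}=\binom{n-1}{s+2}+\binom{n-1}{s+1}$. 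The base case $n=3$ holds because $\mathbf{Sf}_{3,3}(p,1)=1$ forces $a_{3,s}=\chi(s=0)=\binom{2}{s+2}$.

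For (\ref{Sfn4q=1}), set $b_{n,s}=\mathbf{Sf}_{n,4}(p,1)|_{p^s}$. Here $F_4(p,1)=1+2p$ contributes $b_{n,s}+2b_{n,s-1}$, and by the first part $\mathbf{Sf}_{n,3}(p,1)|_{p^s}=\binom{n-1}{s+2}$, so the recursion becomes $b_{n+1,s}=\binom{n-1}{s+2}+b_{n,s}+2b_{n,s-1}$. Substituting the conjectured $b_{n,s}=(2^{s+1}-1)\binom{n-1}{s+3}$, the two terms proportional to $\binom{n-1}{s+2}$ combine with coefficient $1+2(2^{s}-1)=2^{s+1}-1$, and Pascal's rule then collapses $(2^{s+1}-1)[\binom{n-1}{s+2}+\binom{n-1}{s+3}]$ into $(2^{s+1}-1)\binom{n}{s+3}$, as required; the base case $n=4$ again follows from $\mathbf{Sf}_{4,4}(p,1)=1$. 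The only genuine obstacle is bookkeeping: keeping the $s=0$ boundary term (and the convention $b_{n,-1}=0$) aligned across the two identities, and confirming the arithmetic identity $1+2(2^{s}-1)=2^{s+1}-1$ that makes the $\binom{n-1}{s+2}$ coefficients telescope. Everything else is routine Pascal-rule manipulation.
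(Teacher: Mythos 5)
Your proof is correct and follows essentially the same route as the paper's: induction on $n$ via the recursion (\ref{pqSfrec}) at $q=1$, with $F_3(p,1)=1+p$, $F_4(p,1)=1+2p$, the vanishing of $\mathbf{Sf}_{n,2}(p,1)|_{p^s}$ for $s\geq 1$, the coefficient identity $1+2(2^s-1)=2^{s+1}-1$, and Pascal's rule. The only cosmetic difference is that you absorb the $s=0$ case into the unified recursion via the boundary term $(n-1)\chi(s=0)$, whereas the paper anchors $s=0$ separately using Theorem \ref{thm:id5}.
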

\begin{proof}
By Theorem \ref{thm:id5}, we know that $\mathbf{Sf}_{n,3}(p,1)|_{p^0} = 
\binom{n-1}{2}$ and 
$\mathbf{Sf}_{n,4}(p,1)|_{p^0} = \binom{n-1}{3}$. 
Thus our formulas hold for $s=0$. 

We then proceed first by induction on $n$ and then by induction on $s$. We know by Theorem \ref{thm:id1} 
that for $s \geq 1$, $\mathbf{Sf}_{n,2}|_{p^s} = 0$. Thus for 
$s \geq 1$, 
\begin{eqnarray*}
\mathbf{Sf}_{n,3}(p,1)|_{p^s} &=& \mathbf{Sf}_{n-1,2}(p,1)|_{p^s} + 
(F_3(p,1)\mathbf{Sf}_{n-1,3}(p,1))|_{p^s} \\
&=& ((1+p)\mathbf{Sf}_{n-1,3}(p,1))|_{p^s}\\
&=& \mathbf{Sf}_{n-1,3}(p,1))|_{p^s} + \mathbf{Sf}_{n-1,3}(p,1))|_{p^{s-1}} \\
&=& \binom{n-2}{s+2}+ \binom{n-2}{s+1} = \binom{n-1}{s+2}.
\end{eqnarray*}
Similarly, 
\begin{eqnarray*}
\mathbf{Sf}_{n,4}(p,1)|_{p^s} &=& \mathbf{Sf}_{n-1,3}(p,1)|_{p^s} + 
(F_4(p,1)\mathbf{Sf}_{n-1,4}(p,1))|_{p^s} \\
&=& \mathbf{Sf}_{n-1,3}(p,1)|_{p^s} +  
((1+2p)\mathbf{Sf}_{n-1,4}(p,1))|_{p^s}\\
&=& \mathbf{Sf}_{n-1,3}(p,1)|_{p^s} +\mathbf{Sf}_{n-1,4}(p,1))|_{p^s} + 2
\mathbf{Sf}_{n-1,4}(p,1))|_{p^{s-1}} \\
&=& \binom{n-2}{s+2} +(2^{s+1}-1)\binom{n-2}{s+3}+ 2(2^s-1)\binom{n-2}{s+2} \\
&=& (2^{s+1}-1) \left(\binom{n-2}{s+2} + \binom{n-1}{s+3}\right) \\
&=& (2^{s+1}-1) \binom{n-1}{s+3}.
\end{eqnarray*}
\end{proof}

It is then easy to prove by direct calculation 
that the sequences $\{\binom{n-1}{s+2}\}_{s > 0}$ and \\
$\{(2^s-1)\binom{n-1}{s+3}\}_{s > 0}$ are log concave. 
It is not obvious how this direct approach can be extended to 
prove that the polynomials $\mathbf{Sf}_{n,k}(p,1)$ are log concave 
for $k \geq 5$ because the formulas for $\mathbf{Sf}_{n,k}(p,1)|_{p^s}$
become more complicated.  For example, the following formulas are 
straightforward to prove by induction: 
\begin{eqnarray*}
\mathbf{Sf}_{n,5}(p,1)|_{p} &=& 6\binom{n-1}{5},\\
\mathbf{Sf}_{n,5}(p,1)|_{p^2} &=& 25\binom{n-1}{6}+\binom{n-1}{5},\\
\mathbf{Sf}_{n,5}(p,1)|_{p^3} &=& 90\binom{n-1}{7}+9\binom{n-1}{6}, \ \mbox{and}\\
\mathbf{Sf}_{n,5}(p,1)|_{p^4} &=& 301\binom{n-1}{8}+52\binom{n-1}{7} + 
\binom{n-1}{6}.
\end{eqnarray*}

Our next two theorems concern some results on 
$\mathbf{cf}_{n,1}(p,q)|_{p^i}$ and 
$\mathbf{cf}_{n,2}(p,q)|_{p^i}$ for small values of $i$. We 
start by considering $\mathbf{cf}_{n,1}(p,q)|_{p^i}$ for 
$i =0,1,2,3$.  

\begin{theorem} 
\begin{equation}\label{eq:cfn1p0}
\mathbf{cf}_{n,1}(p,q)|_{p^0} =q^{\binom{n}{2}} \ \mbox{for all} \ n \geq 1.
\end{equation}
\begin{equation}\label{eq:cfn1p1}
\mathbf{cf}_{n,1}(p,q)|_{p^1} =\begin{cases} 0 & \mbox{for} \ n=1,2,3 \\
\binom{n-2}{2} q^{\binom{n}{2}-2} & \ \mbox{for all} \ n \geq 4.
\end{cases}
\end{equation}
\begin{equation}\label{eq:cfn1p2}
\mathbf{cf}_{n,1}(p,q)|_{p^2} =\begin{cases} 0 & \mbox{for} \ n=1,2,3,4 \\
\left( 3\binom{n-1}{4} - \binom{n-3}{2}\right) q^{\binom{n}{2}-4} & \ \mbox{for all} \ n \geq 5.
\end{cases}
\end{equation}
\begin{equation}\label{eq:cfn1p3}
\mathbf{cf}_{n,1}(p,q)|_{p^3} =\begin{cases} 0 & \mbox{for} \ n=1,2,3,4,5 \\
\left(15\binom{n}{6}-6\binom{n-2}{4}+\binom{n-4}{4}\right)q^{\binom{n}{2}-6} & \ \mbox{for all} \ n \geq 5.
\end{cases}
\end{equation}
\end{theorem}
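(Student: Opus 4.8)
The plan is to work directly from the product formula $\mathbf{cf}_{n,1}(p,q) = \prod_{i=1}^{n-1} F_i(p,q)$ established in Theorem \ref{thm:id1}, since all four identities are statements about the coefficients of a single such product. First I would record the explicit expansion of each factor. A Fibonacci tiling $T \in \mathcal{FT}_m$ with exactly $j$ tiles of height $2$ has exactly $m - 2j$ tiles of height $1$, and, removing the forced height-$1$ tile at the bottom, the tiles above it form an unrestricted tiling of a height-$(m-1)$ strip by $j$ dominoes and $m-1-2j$ monominoes. Hence
\[
F_m(p,q)\big|_{p^j} = \binom{m-1-j}{j}\, q^{m-2j},
\]
where the binomial vanishes unless $m \geq 2j+1$. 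The crucial structural point is that every monomial of $F_m$ carrying $p^j$ carries $q^{m-2j}$, so any product $\prod_m F_m|_{p^{j_m}}$ with $\sum_m j_m = i$ has $q$-degree $\sum_{m=1}^{n-1}(m-2j_m) = \binom{n}{2} - 2i$, independent of how the $i$ dominoes are distributed among the columns. Therefore
\[
\mathbf{cf}_{n,1}(p,q)\big|_{p^i} = A_i(n)\, q^{\binom{n}{2}-2i}
\]
for an integer $A_i(n)$, and the theorem amounts to showing $A_0(n)=1$, $A_1(n)=\binom{n-2}{2}$, $A_2(n)=3\binom{n-1}{4}-\binom{n-3}{2}$, and $A_3(n)=15\binom{n}{6}-6\binom{n-2}{4}+\binom{n-4}{4}$.

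Next I would convert the product into a recursion. Since $\mathbf{cf}_{n+1,1}(p,q) = F_n(p,q)\,\mathbf{cf}_{n,1}(p,q)$, extracting the coefficient of $p^i$ and dividing out the common power $q^{\binom{n+1}{2}-2i}$ (the $q$-degrees match automatically by the remark above, as $\binom{n}{2}+n = \binom{n+1}{2}$) gives the clean integer recursion
\[
A_i(n+1) = \sum_{j=0}^{i} \binom{n-1-j}{j}\, A_{i-j}(n),
\]
with $A_0(n) = 1$. This turns each assertion into an induction on $n$: assuming the closed forms for $A_{i'}(n)$ with $i' < i$ and for $A_i(n)$, one verifies the proposed form for $A_i(n+1)$. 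Note that the stated closed forms already vanish throughout the ranges where the theorem records a $0$ (for instance $\binom{n-2}{2}=0$ for $n \le 3$), so the case distinctions in the statement are cosmetic and a single induction over all $n \geq 1$ suffices, with trivial base case $n=1$.

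Concretely, the three nontrivial steps reduce to verifying the first-difference identities
\[
A_1(n+1)-A_1(n) = \binom{n-2}{1},\qquad
A_2(n+1)-A_2(n) = (n-2)A_1(n)+\binom{n-3}{2},
\]
and
\[
A_3(n+1)-A_3(n) = (n-2)A_2(n)+\binom{n-3}{2}A_1(n)+\binom{n-4}{3}.
\]
After substituting the conjectured closed forms, each of these is a polynomial identity in $n$ of bounded degree (degree $\le 5$ in the last case), so it can be checked either by Pascal- and Vandermonde-type manipulations of binomial coefficients or, if one prefers, by confirming it at enough integer values of $n$.

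I expect the main obstacle to be the $p^3$ identity: here $A_3(n+1)-A_3(n)$ mixes three sources — $\binom{n-4}{3}$, the term $\binom{n-3}{2}A_1(n)$, and $(n-2)A_2(n)$ built from the already-messy $A_2$ — and collapsing the resulting degree-$5$ polynomial into the compact combination $15\binom{n}{6}-6\binom{n-2}{4}+\binom{n-4}{4}$ is where the bookkeeping is heaviest and most error-prone. As an alternative to the recursion, one could compute each $A_i(n)$ directly as the coefficient $[t^i]\prod_{m=1}^{n-1}\bigl(\sum_{j\ge 0}\binom{m-1-j}{j}t^j\bigr)$, expressing it through the elementary symmetric functions of the numbers $m-2$ via Newton's identities together with the hockey-stick evaluation $\sum_{m=1}^{n-1}\binom{m-1-j}{j}=\binom{n-1-j}{j+1}$; this is a viable cross-check but carries the same power-sum bookkeeping, so I would lead with the recursion.
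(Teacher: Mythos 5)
Your proposal is correct, and it reorganizes the paper's argument in a genuinely cleaner way. The paper proves \eqref{eq:cfn1p0} and \eqref{eq:cfn1p1} by directly counting file tilings of $B_n$ (e.g.\ summing $\sum_{c=4}^{n}(c-3)=\binom{n-2}{2}$ over the column $c$ carrying the domino), and then proves \eqref{eq:cfn1p2} and \eqref{eq:cfn1p3} by induction on $n$ using the same recursion you use, $\mathbf{cf}_{n,1}(p,q)=F_{n-1}(p,q)\,\mathbf{cf}_{n-1,1}(p,q)$, but it carries every $q$-power through each inductive computation and, for the $p^3$ case, iterates the recursion into a closed sum whose final evaluation is delegated to Mathematica. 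Your two structural observations --- the explicit coefficient $F_m(p,q)|_{p^j}=\binom{m-1-j}{j}q^{m-2j}$ and, more importantly, the homogeneity remark that $p$-degree determines $q$-degree, so $\mathbf{cf}_{n,1}(p,q)|_{p^i}=A_i(n)\,q^{\binom{n}{2}-2i}$ --- eliminate the $q$-bookkeeping entirely and compress the paper's four ad hoc computations into the single uniform recursion $A_i(n+1)=\sum_{j=0}^{i}\binom{n-1-j}{j}A_{i-j}(n)$; your three first-difference identities are precisely what the paper's inductive steps verify by hand, and I confirmed they reproduce the paper's data ($A_2(5)=2$, $A_3(6)=9$, $A_3(7)=75$, $A_3(8)=331$). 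Replacing the machine-assisted summation by a bounded-degree polynomial identity check is a legitimate finitary substitute, since two polynomials of degree at most $5$ agreeing at six integer points coincide. One small caveat: your claim that the closed forms ``already vanish'' in the zero ranges, and hence that a single induction from the trivial base case $n=1$ suffices, silently uses the convention $\binom{m}{k}=0$ whenever $m<k$, including negative $m$; under the polynomial extension one has, e.g., $\binom{-2}{2}=3$, which would break the $A_2$ formula at $n=1,2$. Correspondingly, the ``check at enough integer values'' argument is only valid in the range where all binomial tops are nonnegative (say $n\geq 7$ for the $p^3$ identity), so the finitely many small-$n$ cases should be verified directly --- which is immediate, as everything there is $0$ or computable by hand. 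This is a bookkeeping convention to make explicit, not a gap in the argument.
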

\begin{proof}
Equation (\ref{eq:cfn1p0}) follows from the fact that 
$\mathbf{cf}_{n,1}(p,q)|_{p^0}$ counts the Fibonacci file 
tiling of $B_n$ where all but the first column are filled with tiles 
of size 1. There are clearly $1+2+ \cdots +(n-1) = \binom{n}{2}$ such tiles. 

$\mathbf{cf}_{n,1}(p,q)|_{p^1}$ in (\ref{eq:cfn1p1}) counts the $(p,q)$-Fibonacci file 
tilings of $B_n$ where all but the first column are filled with tiles 
of size 1 except for one column $c \in \{4, \ldots, n\}$ which 
is tiled with $c-3$ tiles of height 1 and one tile of height 2. 
Thus the total number of tiles of height 1 in any such tiling 
is $q^{\binom{n}{2}-2}$ and the number of ways to tile column $c$ is 
$c-3$ depending on how many tiles of height 1 in $c$ lies below 
the tile of height 2 in $c$.   Thus if $n \geq 4$, there 
are $\sum_{c=4}^n (c-3) = \binom{n-2}{2}$ such file tilings.  Thus  
$\mathbf{cf}_{n,1}(p,q)|_{p^1}=0$ if $n \leq 3$ and 
$\mathbf{cf}_{n,1}(p,q)|_{p^1}=\binom{n-2}{2} q^{\binom{n}{2}-2}$ if 
$n \geq 4$.

$\mathbf{cf}_{n,1}(p,q)|_{p^2}$ in (\ref{eq:cfn1p2}) counts the 
$(p,q)$-Fibonacci file 
tilings of $B_n$ where there are tilings in columns $2, \ldots, n$ 
and we use exactly 
two tiles of height 2. Clearly, there are no such tilings for 
$n =1,2,3,4$.  For $n=5$, there are two such tilings 
which are pictured in Figure \ref{fig:c51p2}. Thus our 
formula holds for $n =5$.

\fig{c51p2}{The two file tiling for $\mathbf{cf}_{5,1}|_{p^2}$.} 

For $n > 5$, we proceed by induction. Note that 
$F_n(p,q)|_{p^2} = \binom{n-3}{2}$.  That is, for $F_n(p,q)|_{p^2}$, 
we are considering  Fibonacci tilings of height $n$ 
where we have $n-4$ tiles of height 1 
and two tiles of height 2. Since we must start with a tile 
of height 1, the number of such tilings is the number of 
rearrangement of $1^{n-5}2^2$ which is $\binom{n-3}{2}$. 
It is also easy to see that $F_n(p,q)|_{p} = n-2$.
Then 
\begin{eqnarray*}
\mathbf{cf}_{n,1}|_{p^2} &=&  \mathbf{cf}_{n-1,0}|_{p^2} + 
\left( F_{n-1}(p,q)\mathbf{cf}_{n-1,0}\right)|_{p^2}  \\
&=&\left( F_{n-1}(p,q)|_{p^2}\right) \left( \mathbf{cf}_{n-1,1}|_{p^0}\right)
+\left( F_{n-1}(p,q)|_{p^1}\right) \left( \mathbf{cf}_{n-1,1}|_{p^1}\right)
+ \\
&&\left( F_{n-1}(p,q)|_{p^0}\right) \left( \mathbf{cf}_{n-1,1}|_{p^2}\right)\\
&=&\left(\binom{n-4}{2}\right) \left(q^{\binom{n-1}{2}}\right)
+\left((n-3)q^{n-3}\right) \left( \binom{n-3}{2} q^{\binom{n-1}{2}-2}\right)
+ \\
&&\left(q^{n-1}\right) \left( q^{\binom{n-1}{2}-4}\left(3\binom{n-2}{4} - \binom{n-4}{2}\right)\right)\\
&=& q^{\binom{n}{2}-4}\left(\binom{n-4}{2} + (n-3)\binom{n-3}{2} + 
3\binom{n-2}{4} - \binom{n-4}{2}\right)\\
&=& q^{\binom{n}{2}-4}\left(3\binom{n-2}{3} - \binom{n-3}{2} + 3\binom{n-2}{4}\right)\\
&=& q^{\binom{n}{2}-4}  \left( 3 \binom{n-1}{4}-\binom{n-3}{2}  \right).
\end{eqnarray*}

$\mathbf{cf}_{n,1}(p,q)|_{p^3}$  in (\ref{eq:cfn1p3}) counts the Fibonacci file 
tilings of $B_n$ where there are tilings in columns $2, \ldots, n$  
and we use exactly 
three tiles of height 2. In general, the number of 
tiles of height 1 in such tiling is $\binom{n-2}{2}-6$.  
It is easy to check that there are no such tilings for 
$n =1,2,3,4,5$.  For $n=6$, there are nine such tilings 
which are pictured in Figure \ref{fig:c61p3}. Thus our 
formula holds for $n =6$. 

\fig{c61p3}{The 9 file tiling for $\mathbf{cf}_{6,1}|_{p^3}$.} 

For $n > 6$, we proceed by induction. Note that 
$F_n(p,q)|_{p^3} = \binom{n-4}{3}$.  That is, for $F_n(p,q)|_{p^3}$, 
we are considering  Fibonacci tilings of height $n$ 
where we have $n-6$ tiles of height 1 
and three tiles of height 2. Since we must start with a tile 
of height 1, the number of such tilings is the number of 
rearrangement of $1^{n-7}2^3$ which is $\binom{n-4}{3}$. 
Then 
\begin{eqnarray*}
\mathbf{cf}_{n,1}|_{p^3} &=&  \mathbf{cf}_{n-1,0}|_{p^3} + 
\left( F_{n-1}(p,q)\mathbf{cf}_{n-1,1}\right)|_{p^3}  \\
&=&\left( F_{n-1}(p,q)|_{p^3}\right) \left( \mathbf{cf}_{n-1,1}|_{p^0}\right)
+\left( F_{n-1}(p,q)|_{p^2}\right) \left( \mathbf{cf}_{n-1,1}|_{p^1}\right)
+ \\
&& \left( F_{n-1}(p,q)|_{p^1}\right) \left( \mathbf{cf}_{n-1,1}|_{p^2}\right)
+\left( F_{n-1}(p,q)|_{p^0}\right) \left( \mathbf{cf}_{n-1,1}|_{p^3}\right)\\
&=&\left(\binom{n-5}{3}q^{n-7}\right) \left(q^{\binom{n-1}{2}}\right)
+\left(\binom{n-4}{2}q^{n-5}\right) \left( \binom{n-3}{2} q^{\binom{n-1}{2}-2}\right)
+ \\
&& \left((n-3)q^{n-3}\right)\left( q^{\binom{n-1}{2}-4}
\left(3\binom{n-2}{4} - \binom{n-4}{2}\right)\right) +
\left(q^{n-1}\right) \mathbf{cf}_{n-1,1}|_{p^3}\\
&=& q^{n-1} \mathbf{cf}_{n-1,1}|_{p^3} + \\
&& q^{\binom{n}{2}-6} \left( \binom{n-5}{3} + \binom{n-4}{2} \binom{n-3}{2} +(n-3)\left( 
3\binom{n-2}{2}-\binom{n-4}{2}\right) \right) .
\end{eqnarray*}
\\
It follows that we have the recursion 
\begin{multline}\label{cfn1p3rec}
\left(\mathbf{cf}_{n,1}|_{p^3}\right)|_{q^{\binom{n}{2}-6}} = \\
\left(\mathbf{cf}_{n-1,1}|_{p^3}\right)|_{q^{\binom{n-1}{2}-6}} +
\binom{n-5}{3} + \binom{n-4}{2} \binom{n-3}{2} +(n-3)\left( 
3\binom{n-2}{2}-\binom{n-4}{2}\right).
\end{multline}
Iterating (\ref{cfn1p3rec}), it follows 
that for $n \geq 7$, 
\begin{multline*}
\mathbf{cf}_{n,1}|_{p^3} = \\
  q^{\binom{n}{2}-6}\left( 9 + \sum_{k=7}^n 
\binom{k-5}{3} + \binom{k-4}{2}\binom{k-3}{2} +(k-3)\left( 
3 \binom{k-2}{2}-\binom{k-4}{2}\right)\right) = \\
\binom{n-4}{2}\left( \frac{12+28n+n^2-6n^3+n^4}{24}\right) =
15\binom{n}{6} -6\binom{n-2}{4}+\binom{n-4}{4}
\end{multline*}
where we have used Mathematica to verify the last two equalities. 
\end{proof}

We note that the sequence 
$\{3\binom{n-1}{4} -\binom{n-3}{2}\}_{n \geq 5}$ starts out 
$$2,12,39,95,195,357,602,954, \ldots .$$ This is sequence A086602 in 
the OEIS \cite{OEIS}. This sequence does not have a combinatorial 
interpretation so that we have now given a combinatorial 
interpretation of this sequence. 

The sequence $\{15\binom{n}{6} -6\binom{n-2}{4} +\binom{n-4}{4}\}_{n \geq 6}$ starts out 
$$9,75,331,1055,2745,6209,12670,23886,42285,71115, \ldots .$$
This sequence 
does not appear in the OEIS.

Next we consider $\mathbf{cf}_{n,2}(p,q)|_{p^i}$ for 
$i =0,1$.  

\begin{theorem}\label{thm:cfn2}
For $n \geq 2$, 
\begin{equation}\label{cfn2p0}
\mathbf{cf}_{n,2}(p,q)|_{p^0} = q^{\binom{n-1}{2}}[n-1]_q.
\end{equation}

\begin{equation}\label{cfn2p1}
\mathbf{cf}_{n,2}(p,q)|_{p^1} = 
\begin{cases} q^2+q^3 & \mbox{for} \ n =4 \\
\binom{n-2}{2} q^{\binom{n}{2}-3} +
q^{\binom{n-1}{2}-2} \sum_{i=0}^{n-3} \left(\binom{n-3}{2}+i\right)q^i
& \mbox{for} \ n \geq 5.
\end{cases}
\end{equation}
\end{theorem}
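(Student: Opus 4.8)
The plan is to prove both identities directly from the file-placement interpretation $\mathbf{cf}_{n,2}(p,q) = \mathbf{fT}_{n-2}(B_n,p,q)$ recorded in \eqref{cnkpqinterp}. Since $B_n = F(0,1,\dots,n-1)$ has exactly $n-1$ columns of positive height, namely heights $1,2,\dots,n-1$, a file placement counted by $\mathbf{fT}_{n-2}(B_n,p,q)$ uses $n-2$ tilings and so leaves exactly one positive-height column empty. I would therefore organize the whole count by which column is the empty one: write $c$ for the unique empty column, so that $c$ ranges over $\{2,\dots,n\}$ and the empty column has height $c-1$. Throughout I will use that $F_m(p,q)|_{p^0}=q^m$ (the all-ones tiling is the unique domino-free tiling of height $m$) and that a column of height $m\ge 3$ carries exactly $m-2$ single-domino tilings, each contributing a factor $q^{m-2}p$.

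For \eqref{cfn2p0}, extracting the coefficient of $p^0$ forces every tiled column to use its all-ones tiling. If the empty column has height $c-1$, the total number of height-$1$ tiles is $\binom{n}{2}-(c-1)$ (all cells of the positive columns minus the cells of the empty column), so the placement has weight $q^{\binom{n}{2}-(c-1)}$. Summing over $c=2,\dots,n$, i.e.\ over empty-column heights $m=1,\dots,n-1$, gives $\sum_{m=1}^{n-1} q^{\binom{n}{2}-m} = q^{\binom{n}{2}-(n-1)}[n-1]_q = q^{\binom{n-1}{2}}[n-1]_q$, which is \eqref{cfn2p0}.

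For \eqref{cfn2p1} the coefficient of $p^1$ counts placements with exactly one domino, sitting in some tiled column $d\neq c$. The key simplification I would exploit is that the power of $q$ depends only on the total number of height-$1$ tiles, which is $\binom{n}{2}-(c-1)-2$ regardless of which column $d$ holds the domino or where in that column it lies. Hence I can factor out $q^{\binom{n}{2}-(c-1)-2}$ and merely count domino positions: the number of positions available among the columns $d\neq c$ equals $\binom{n-2}{2}-\chi(c\ge 3)(c-3)$, giving
\[
\mathbf{cf}_{n,2}(p,q)|_{p^1} = \sum_{c=2}^{n} \left(\binom{n-2}{2} - \chi(c\ge 3)(c-3)\right) q^{\binom{n}{2}-(c-1)-2}.
\]
I would then split off the term $c=2$, which is exactly $\binom{n-2}{2}q^{\binom{n}{2}-3}$, matching the first summand of \eqref{cfn2p1}, and reindex the remaining terms by $i=n-c$ (so $i$ runs from $0$ to $n-3$).

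The only genuine work I expect is this last reindexing, which reduces to the two elementary identities $\binom{n-2}{2}-(n-3)=\binom{n-3}{2}$ for the coefficient and $\binom{n}{2}-c-1=\binom{n-1}{2}-2+(n-c)$ for the $q$-exponent. Granting these, the $c\ge 3$ part becomes $q^{\binom{n-1}{2}-2}\sum_{i=0}^{n-3}\bigl(\binom{n-3}{2}+i\bigr)q^i$, which is the second summand of \eqref{cfn2p1}. Finally I would note that this uniform computation already specializes correctly at $n=4$, where $\binom{n-3}{2}=0$ and the formula collapses to $q^3+q^2$, so the displayed $n=4$ case needs only this one-line check. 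Should the bookkeeping above prove delicate, an entirely parallel route is to run the recursion $\mathbf{cf}_{n,2}=\mathbf{cf}_{n-1,1}+F_{n-1}(p,q)\mathbf{cf}_{n-1,2}$, extract $p^1$ using $F_{n-1}(p,q)|_{p^0}=q^{n-1}$ and $F_{n-1}(p,q)|_{p^1}=(n-3)q^{n-3}$ together with \eqref{eq:cfn1p1} and \eqref{cfn2p0}, and verify by induction on $n$ that the claimed closed form satisfies the resulting recurrence.
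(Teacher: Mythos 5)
Your proof is correct, and for \eqref{cfn2p1} it takes a genuinely different route from the paper's. For \eqref{cfn2p0} you and the paper argue the same way: sum over the unique empty column, with the all-ones tilings contributing $q^{\binom{n}{2}-m}$ as the empty height $m$ runs over $1,\dots,n-1$. For \eqref{cfn2p1}, however, the paper does not count directly: it exhibits the two tilings for $n=4$, verifies $n=5$ from the recursion, and then for $n>5$ inducts on $n$ via $\mathbf{cf}_{n,2}(p,q)|_{p} = \mathbf{cf}_{n-1,1}(p,q)|_{p} + \left(F_{n-1}(p,q)\,\mathbf{cf}_{n-1,2}(p,q)\right)|_{p}$, feeding in $F_{n-1}(p,q)|_{p}=(n-3)q^{n-3}$, the formula \eqref{eq:cfn1p1} for $\mathbf{cf}_{n-1,1}(p,q)|_{p}$, and \eqref{cfn2p0}, followed by a substantial algebraic simplification --- in other words, exactly the fallback you sketch in your last sentence. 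Your primary argument instead produces the answer in closed form in one pass: the $q$-exponent $\binom{n}{2}-(c-1)-2$ depends only on the empty column $c$ and not on where the domino sits, and the number of single-domino placements is $\binom{n-2}{2}-\max(c-3,0)$, since a column of height $m$ admits exactly $m-2$ one-domino tilings (the bottom tile must have height $1$); your two reindexing identities $\binom{n-2}{2}-(n-3)=\binom{n-3}{2}$ and $\binom{n}{2}-c-1=\binom{n-1}{2}-2+(n-c)$ are both correct. This buys you a self-contained proof that needs neither \eqref{eq:cfn1p1} nor induction, and it exposes the case split in the statement as an artifact: your uniform sum specializes to $q^2+q^3$ at $n=4$, as you note. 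What the paper's inductive method buys in exchange is systematization: it is the same machine the paper runs for $\mathbf{cf}_{n,1}(p,q)|_{p^2}$ and $\mathbf{cf}_{n,1}(p,q)|_{p^3}$, where a direct enumeration over placements of two or three dominoes distributed among several columns would be considerably messier than your one-domino count.
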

\begin{proof}

For $\mathbf{cf}_{n,2}(p,q)|_{p^0}$, we know that $\mathbf{cf}_{2,2}(p,q) =1$
so that our formula holds for $n =2$.   For $n \geq 3$, we 
must count the weights of all the Fibonacci file tilings 
where we use no tiles of height 2 such that 
there is exactly one empty column. It is easy to see 
that if the empty column is at the end, the 
number of tiles of size 1 is $1+2+ \cdots + (n-2) = \binom{n-1}{2}$.
Then as the empty column moves right to left, 
we see that we replace a column with $i$ tiles of height 
1 by a column 
with $i+1$ tiles of height 1. This process is pictured in Figure \ref{fig:cfn2} for $n =6$. It follows that for $n \geq 3$, 
$$\mathbf{cf}_{n,2}(p,q) = q^{\binom{n-1}{2}}+q^{\binom{n-1}{2}+1} \cdots + 
q^{\binom{n-1}{2}+(n-2)} 
=q^{\binom{n-1}{2}}(1+q+ \cdots + q^{n-2}) = q^{\binom{n-1}{2}}[n-1]_q.$$

\fig{cfn2}{The tilings for $\mathbf{cf}_{6,2}(p,q)$.}  

For $\mathbf{cf}_{4,2}(p,q)|_{p}$, 
there are only 
two Fibonacci file tilings which have one tile of height 2. 
These are pictured in 
Figure \ref{fig:cf42p1}. Thus 
$\mathbf{cf}_{4,2}(p,q)|_{p} =q^2+q^3$.

\fig{cf42p1}{The tilings for $\mathbf{cf}_{4,2}(p,q)|_{p}$.}

Note that by (\ref{eq:cfn1p1}) $\mathbf{cf}_{4,1}(p,q)|_{p}=q^4$. 
Hence 
\begin{eqnarray*}
\mathbf{cf}_{5,2}(p,q)|_{p} &=& 
\mathbf{cf}_{4,1}(p,q)|_{p} +\left(F_4(p,q)
\mathbf{cf}_{4,2}(p,q)\right)|_p \\
&=& q^{4} + \left(F_4(p,q)|_p\right) \left( 
\mathbf{cf}_{4,2}(p,q)|_{p^0}\right) + 
\left(F_4(p,q)|_{p^0}\right) \left( 
\mathbf{cf}_{4,2}(p,q)|_{p}\right)  \\
&=& q^4 + (2q^2)\left(\binom{3}{2}[3]_q\right) + q^4(q^2+q^3) \\
&=& q^4 + 2q^5 +3q^6 +3q^7. 
\end{eqnarray*}
This verifies our formula for $n =5$.

For $n > 5$, we proceed by induction. That is, 
\begin{eqnarray*}
\mathbf{cf}_{n,2}(p,q)|_{p} &=& 
\mathbf{cf}_{n-1,1}(p,q)|_{p} +\left(F_{n-1}(p,q)
\mathbf{cf}_{n-1,2}(p,q)\right)|_p \\
&=& \binom{n-3}{2}q^{\binom{n-1}{2}-2} + \\
&&\left(F_{n-1}(p,q)|_p\right) \left( 
\mathbf{cf}_{n-1,2}(p,q)|_{p^0}\right)+ 
\left(F_{n-1}(p,q)|_{p^0}\right) \left( 
\mathbf{cf}_{n-1,2}(p,q)|_{p}\right) \\
&=& \binom{n-3}{2}q^{\binom{n-1}{2}-2} + \left((n-3)q^{n-3}\right) \left( 
q^{\binom{n-2}{2}}[n-2]_q\right)+ \\
&& \left(q^{n-1} \right) \left(\binom{n-3}{2} q^{\binom{n-1}{2}-3} +
q^{\binom{n-2}{2}-2} \sum_{i=0}^{n-4} \left(\binom{n-4}{2}+i\right)q^i\right)\\
&=& \binom{n-3}{2}q^{\binom{n-1}{2}-2} +\\
&& q^{\binom{n-1}{2}-1}\left((n-3)q^{n-3}+\sum_{i=0}^{n-4}(n-3)q^i \right) +\\
&& \left(\binom{n-3}{2} q^{\binom{n}{2}-3} +
q^{\binom{n-1}{2}-1} \sum_{i=0}^{n-4} \left(\binom{n-4}{2}+i\right)q^i\right)\\
&=& \binom{n-3}{2}q^{\binom{n-1}{2}-2}+q^{\binom{n}{2}-3}\left((n-3) + \binom{n-3}{2} \right)+\\
&&q^{\binom{n-1}{2}-1} \sum_{i=0}^{n-4}
\left( (n-3) +\binom{n-4}{2} +i\right)q^i\\
&=& \binom{n-2}{2} q^{\binom{n}{2}-3} +
q^{\binom{n-1}{2}-2} \sum_{i=0}^{n-3} \left(\binom{n-3}{2}+i\right)q^i. 
\end{eqnarray*}
\end{proof}

It is easy to see from our formula for 
$\mathbf{cf}_{n,2}(p,q)|_{p}$ that 
\begin{eqnarray*}
\mathbf{cf}_{n,2}(p,1)|_{p} &=& \binom{n-2}{2} +
(n-2)\binom{n-3}{2} + \binom{n-2}{2}\\
&=& \binom{n-2}{2} +(n-4)\binom{n-2}{2} + \binom{n-2}{2}\\
&=& (n-2)\binom{n-2}{2}.
\end{eqnarray*}
We note that the sequence $\{\mathbf{cf}_{n,2}(p,q)|_{p}\}_{n \geq 4}$ 
starts out 
$$2,9,24,90,147,224,324,450,605,792,1014, \ldots .$$
This is sequence A006002 in the OEIS which does not have a combinatorial interpretation.  Thus we have given a combinatorial interpretation to this sequence.

Our computational evidence suggests that 
the polynomials $\mathbf{cf}_{n,k}(p,1)$ are also log concave. 
We can prove this in the case $k=1$. In that 
case, $\mathbf{cf}_{n,k}(p,1) = \prod_{i=1}^{n-1} F_i(p,1)$ and 
one can prove that the polynomials $F_n(p,1)$ have real roots. 
Thus  $\mathbf{cf}_{n,k}(p,1)$ has real roots and, hence, is 
log-concave.

\section{Conclusions}

In this paper, we studied Fibonacci analogues of 
the Stirling numbers of the first and second kind. That is, 
we have studied the connection  
coefficients defined by the equations
\begin{equation*}
x(x+F_1(p,q)) \cdots (x+F_{n-1}(p,q)) =
\sum_{k=1}^n \mathbf{cf}_{n,k}(p,q) x^k
\end{equation*}
and 
\begin{equation*}
x^n =
\sum_{k=1}^n \mathbf{Sf}_{n,k}(p,q) x(x-F_1(p,q))\cdots (x-F_{k-1}(p,q)).
\end{equation*}
We also have given a rook theory model for the $\mathbf{cf}_{n,k}(p,q)$s 
and $\mathbf{Sf}_{n,k}(p,q)$s. 

There are other natural $q$-analogues for Fibonacci analogues 
of the Stirling numbers of the first and second kind. 
For example, we could study the connection coefficients defined by the equations
\begin{equation*}
[x]_q[x+F_1]_q \cdots [x+F_{n-1}]_q =
\sum_{k=1}^n \mathbf{cF}_{n,k}(q) [x]_q^k
\end{equation*}
and 
\begin{equation*}
[x]_q^n =
\sum_{k=1}^n \mathbf{SF}_{n,k}(q) [x]_q[x-F_1]_q \cdots [x-F_{k-1}]_q.
\end{equation*}
It turns out that our basic rook theory model can 
also be used to give a combinatorial 
interpretation to $\mathbf{cF}_{n,k}(q)$'s 
and $\mathbf{SF}_{n,k}(q)$'s. In this case, 
we have to weight the Fibonacci tilings 
of height $n$ in a different way. The basic 
idea is that there is a natural 
tree associated with the Fibonacci tilings of height $n$. 
That is, we start from the top of a Fibonacci tiling 
and branch left if we see a tile of height 1 and branch 
right if we see a tiling of height 2. We shall call 
such a tree, the Fibonacci tree for $F_n$. 
For example, 
the Fibonacci tree for $F_5$ is pictured 
in Figure \ref{fig:Ftree}. 

\fig{Ftree}{The tree for $F_5$}

Then the paths in the tree correspond to Fibonacci tilings and 
we define the rank of a Fibonacci tiling $T$ of height $n$, 
$\mathbf{rank}(T)$, to be the number of paths in the 
Fibonacci tree for $F_n$ which lie to left of the path 
that corresponds to $T$.  In this way, the ranks of 
that Fibonacci tiling range from $0$ to $F_n-1$ and, hence 
$$\sum_{T \ \mbox{is a Fibonacci tiling of height} \ n}
q^{\mathbf{rank}(T)} = 1+q+ \cdots +q^{F_n-1} =[F_n]_q.$$

We shall show in a subsequent paper 
that by weighting a Fibonacci rook tilings or a Fibonacci 
file tilings of $B_n$, 
$$P = ((c_{i_1},T_{i_1}),  \ldots, (c_{i_k},T_{i_k}))$$ 
by $q^{s(P)} q^{\sum_{j=1}^k \mathbf{rank}(T_{i_j})}$ 
for some appropriate statistic $s(P)$, we 
can give a combinatorial interpretation to 
$\mathbf{cF}_{n,k}(q)$s 
and $\mathbf{SF}_{n,k}(q)$s.


\end{document}